\documentclass{imammb}

%
\jno{dqnxxx}

\usepackage{natbib}


\usepackage{graphicx}
\usepackage{amsmath,amsthm,bm,mathrsfs}
\usepackage{color}


%
%
\renewenvironment{proof}[1][Proof]{\noindent\textit{#1. } }{\hfill$\square$}


 \newtheoremstyle{theorem}{6pt}{6pt}{\rm}{}{\sffamily}{ }{ }{}
 \theoremstyle{theorem}
\newtheorem{theorem}{\sc Theorem}[section]

 \newtheoremstyle{algorithm}{6pt}{6pt}{\rm}{}{\sffamily}{ }{ }{}
 \theoremstyle{algorithm}

 \newtheoremstyle{lemma}{6pt}{6pt}{\rm}{}{\sffamily}{ }{ }{}
 \theoremstyle{lemma}
 \newtheorem{lemma}{\sc Lemma}[section]

\newtheoremstyle{case}{6pt}{6pt}{\rm}{}{\sffamily}{. }{ }{}
 \theoremstyle{case}

 \newtheoremstyle{statement}{6pt}{6pt}{\rm}{}{\sffamily}{ }{ }{}
\theoremstyle{statement}

 \newtheoremstyle{corollary}{6pt}{6pt}{\rm}{}{\sffamily}{ }{ }{}
 \theoremstyle{corollary}
 \newtheorem{corollary}{\sc Corollary}[section]
 
  \newtheoremstyle{definition}{6pt}{6pt}{\rm}{}{\sffamily}{ }{ }{}
 \theoremstyle{definition}
 \newtheorem{definition}{\sc Definition}[section]

\newtheoremstyle{example}{6pt}{6pt}{\rm}{}{\sffamily}{ }{ }{}
\theoremstyle{example}

\newtheoremstyle{remark}{6pt}{6pt}{\rm}{}{\sffamily}{ }{ }{}
\theoremstyle{remark}
\newtheorem{remark}{\sc Remark}[section]

\newtheoremstyle{approximation}{6pt}{6pt}{\rm}{}{\sffamily}{ }{ }{}
\theoremstyle{approximation}

\newtheoremstyle{scheme}{6pt}{6pt}{\rm}{}{\sffamily}{ }{ }{}
\theoremstyle{scheme}

\newtheoremstyle{Algorithm}{6pt}{6pt}{\rm}{}{\sffamily}{ }{ }{}
\theoremstyle{Algorithm}

\newtheoremstyle{Assumption}{6pt}{6pt}{\rm}{}{\sffamily}{ }{ }{}
\theoremstyle{Assumption}

\newtheoremstyle{proposition}{6pt}{6pt}{\rm}{}{\sffamily}{ }{ }{}
\theoremstyle{proposition}
\newtheorem{proposition}{\sc Proposition}[section]

\newtheoremstyle{hypo}{6pt}{6pt}{\rm}{}{\sffamily}{ }{ }{}
 \theoremstyle{hypo}

  \newtheoremstyle{Step}{6pt}{6pt}{\rm}{}{}{ }{ }{}
 \theoremstyle{Step}


\numberwithin{equation}{section}

\newcommand{\df}{\displaystyle\frac}
\newcommand{\la}{\lambda}

\newcommand{\om}{\omega}

\begin{document}

\title{Stability and Hopf bifurcation analysis for the hypothalamic-pituitary-adrenal axis model with memory}

\author{{\sc Eva Kaslik$^{1,2}$, Mihaela Neam\c{t}u$^{1,3}$}\\[2pt]
$^1$West University of Timi\c{s}oara, Bd. V. Parvan nr. 4, 300223, Romania, \\
$^2$Institute e-Austria Timisoara, Bd. V. Parvan nr. 4, cam. 045B, 300223, Romania\\
$^3$University Politehnica of Bucharest, 313 Splaiul Independentei, 060042 Bucharest, Romania\\[6pt]
{\rm [Received on April 5, 2016. Revised on August 11, 2016]}\vspace*{6pt}}

\pagestyle{headings}
\markboth{E. KASLIK, M. NEAM\c{T}U}{\rm HYPOTHALAMIC-PITUITARY-ADRENAL AXIS MODEL WITH MEMORY}
\maketitle


\begin{abstract}
{This paper generalizes the existing minimal model of the hypothalamic-pituitary-adrenal (HPA) axis in a realistic way, by including memory terms: distributed time delays, on one hand and frac\-tional-order derivatives, on the other hand.  The existence of a unique equilibrium point of the mathematical models is proved and a local stability analysis is undertaken for the system with general distributed delays. A thorough bifurcation analysis for the distributed delay model with several types of delay kernels is provided. Numerical simulations are carried out for the distributed delays models and for the fractional-order model with discrete delays, which substantiate the theoretical findings. It is shown that these models are able to capture the vital mechanisms of the HPA system.}

{HPA axis; stability; Hopf bifurcation; distributed delays; fractional-order derivatives.}
\end{abstract}

\section{Introduction}

The hypothalamus–pituitary–adrenal (HPA) axis is a self-regulated dynamic feedback neuroendocrine system that is engaged in the rapid response to stressful stimuli and is responsible for the return to homeostasis through complex feedback mechanisms \cite{Conrad_2009,Swanson_2000}. By regulating the plasma levels of corticosteroids secreted from adrenal glands, it also controls many bodily processes, including mood and emotions, digestion, sexuality, the immune system, energy storage and expenditure.

The HPA axis is organized into three distinct regions: the hypothalamus, pituitary gland and adrenal gland, with a complex set of direct influences and feedback interactions among the three endocrine glands.  These glands work together by producing and secreting, or responding to common hormones including corticotropin-releasing hormone (CRH), corticotropin (ACTH), and cortisol (CORT) \cite{Kyrylov_2005}.

Both physical stressors (e.g. infection, thermal exposure, dehydration) and psychological stressors (e.g. fear, anticipation) activate the hypothalamus to release CRH, which induces the ACTH production in the pituitary. Then, ACTH is transported by the blood to the adrenal cortex of the adrenal gland, where it stimulates the production of cortisol, which in turn suppresses the production of both CRH and ACTH (see Fig. \ref{fig:0}).

\begin{figure}
\centering
\includegraphics*[width=0.3\textwidth]{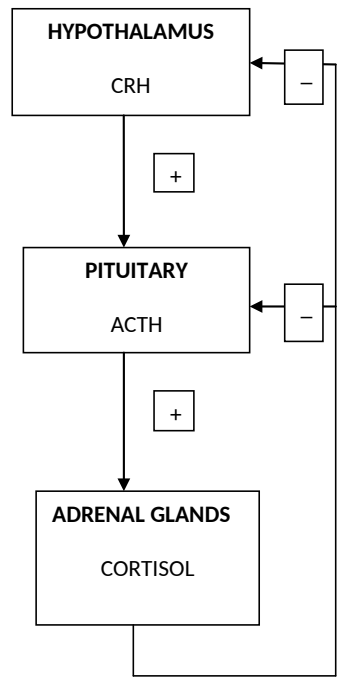}
\caption{A simple schematic representation of the HPA axis with the negative feedback.}
\label{fig:0}
\end{figure}

It is important to keep cortisol concentration within a certain physiological range. Cortisol overproduction,
which is often due to a pituitary tumour causing high levels of circulating ACTH, leads to Cushing’s disease.
Cortisol underproduction, which generates Addison’s disease, is most frequently the result of adrenal destruction.

In the past few decades, mathematical modelling has started to play an increasingly important role in the study of metabolic and endocrine processes, both in physiology and in clinical medicine \cite{Bairagi_2008}. The theory of nonlinear dynamical systems has become a promising research tool for studying rhythmicity in humans. Moreover, it is often mandatory to introduce time delays in the mathematical models describing real life phenomena.

Analyzing a mathematical model of the HPA axis is helpful in understanding the simultaneous feedback mechanisms and revealing the different ways in which a malfunction can occur \cite{Andersen_2013}. It is important to emphasize that time delays { of up to 60 min (according to \cite{Boscaro_1998,Posener_1997}) } unavoidably exist in the HPA axis, due to the transportation of the hormones among the three glands.

Several mathematical models of the HPA axis have been developed and analyzed by \cite{Andersen_2013,Bairagi_2008,Conrad_2009,Gudmand_2014,Jelic_2005,Kyrylov_2005,Lenbury_2005,Markovic_2011,Pornsawad_2013,Savic_2005,Savic_2006,Vinther_2011}.
Experimental data show the circadian as well as ultradian rhythm of hormone levels { \cite{Carroll_2007,Veldhuis_2008} }, which should be reflected by the mathematical models of the HPA axis, through the existence of oscillatory solutions. The ultradian rhythm is usually considered an inherent behavior of the HPA axis, whereas the circadian rhythm is regarded as an external input to the axis \cite{Andersen_2013}.

A frequently studied model of the HPA axis, called "minimal model" \cite{Vinther_2011}, consists of a system of three coupled, non-linear differential equations, with the hormones CRH, ACTH and { free } cortisol as variables. { While the mathematical model investigated by \cite{Kyrylov_2005} included two more differential equations, accounting for corticosteroid-binding globulin (CBG) bound cortisol, and albumin bound cortisol, \cite{Vinther_2011} pointed out that only free cortisol is capable of interacting with the rest of the HPA-axis, which is the reason for considering only three differential equations in the formulation of the minimal model. } If time delays are not incorporated in the mathematical model, no oscillatory behaviour has been detected \cite{Andersen_2013,Savic_2005,Vinther_2011}. For systems of ordinary differential equations, sufficiently large time delays are often linked with generating oscillatory solutions.

A particular case of the minimal model with discrete time-delays and exponential negative-feedback mechanism has been studied in \cite{Lenbury_2005} and later, in \cite{Pornsawad_2013}, revealing the occurrence of Hopf bifurcations resulting in the emergence of periodic orbits. Other variants of the minimal model with Hill-type feedback functions and discrete time-delays have been considered in \cite{Savic_2005,Savic_2006}, but no oscillations have been reported. More recently, Vinther et al. \cite{Vinther_2011} have included discrete time-delays in the minimal model and have observed periodic solutions by computer simulations. Their investigations show that time delays of at least 18 min in the feedback mechanisms are needed for generating oscillations. A slightly modified version of the minimal model with discrete time-delays has been analyzed in \cite{Bairagi_2008}, successfully obtaining the desired Hopf bifurcation and thereby, oscillating solutions for sufficiently large time-delays.


In this paper, we generalize the existing minimal model of the HPA axis in two realistic ways: firstly, by including distributed time delays and secondly, by considering frac\-tional-order derivatives.

On one hand, distributed time delays represent the situation where the delays occur in certain ranges of values with some associated probability distributions, taking into account the whole past history of the variables. In many real world applications, distributed time delays are more realistic and more accurate than discrete time delays \cite{Cushing_2013}. Distributed delay models appear in a wide range of applications such as, population biology \cite{Faria_2008,Ruan_1996}, hematopoiesis \cite{Adimy_2003,Adimy_2005,Adimy_2006,Ozbay_2008}, neural networks \cite{Jessop_2010}.

On the other hand, the main benefit of fractional-order models in comparison with classical integer-order models is that fractional derivatives provide a good tool for the description of memory and hereditary properties of various processes \cite{Kilbas,Lak,Podlubny}.

{ Due to the fact that the whole past history of the variables is accounted for in the formulation of both distributed time-delays as well as fractional-order derivatives, these generalizations of the minimal model are able to reflect biological variability in a better way than other approaches. }

The paper is structured as follows. Section 2 provides the mathematical model of the HPA axis, where instead of considering the transportation of different hormones as an instantaneous process, we introduce distributed time delays to account for the time needed by the hormones to travel from source to destination. In Section 3, the existence of a unique equilibrium point of the system is shown. Local stability analysis of the system with general distributed delays is analyzed in Section 4. In Section 5, we undertake a bifurcation analysis for the distributed delay model in the case of several types of delay kernels. The fractional order mathematical model of the HPA axis is presented and shortly analyzed in Section 6. Numerical simulations are carried out and discussed in Section 7, followed by concluding remarks in Section 8.

\section{Mathematical model of HPA with distributed delays}

In formulating the mathematical model which describes the variation in time of the concentrations of the three hormones CRH, ACTH and CORT, the following sequence of typical events is considered, according to the schema presented in Fig. \ref{fig:0}. CRH is secreted from the hypothalamus and released into the portal blood vessel of the hypophyseal stalk, and then transported to the anterior pituitary where it stimulates the secretion of ACTH, with an average time delay $\tau_1$. Then, in the cortex of the adrenal glands, ACTH stimulates the secretion of the stress hormone cortisol with the average time delay $\tau_2$. Cortisol has a negative feedback effect on the hypothalamus and the pituitary, expressed by two feedback functions $f_1$ and $f_2$, affecting the synthesis and release of CRH and ACTH, respectively. On one hand, cortisol inhibits the secretion of CRH through glucocorticoid receptors (GRs) situated in the hypothalamus \cite{Landsberg_1992}, with an average time delay $\tau_{31}$. On the other hand, cortisol also performs a negative feedback on the secretion of ACTH through GRs situated in the pituitary, with an average time delay $\tau_{32}$. The hormone concentrations of CRH, ACTH and cortisol are depleted through the rate constants $w_1$, $w_2$ and $w_3$, respectively.

The mathematical model of the HPA axis studied in this paper is based on the minimal model introduced in \cite{Vinther_2011}. The main improvement is that we consider distributed time delays to account for the transport of the hormones among the glands. Denoting the hormone concentrations, for simplicity, by $CRH(t)=x_1(t)$, $ACTH(t)=x_2(t)$, $CORT(t)=x_3(t)$, the following system of differential equations with distributed delays is considered:
\begin{equation}\label{sys.hpa.dd}
\left\{\begin{array}{l}
\dot x_1(t)=f_1\left(\int_{-\infty}^t x_3(s)h_{31}(t-s)ds\right)-w_1x_1(t),\\
\dot x_2(t)=f_2\left(\int_{-\infty}^t x_3(s)h_{32}(t-s)ds\right)\int_{-\infty}^tx_1(s)h_1(t-s)ds-w_2x_2(t),\\
\dot  x_3(t)=k_3\int_{-\infty}^tx_2(s)h_2(t-s)ds-w_3x_3(t),
\end{array}\right.
\end{equation}
where all the first terms on the right hand side represent production and all the second terms represent depletion of hormones. The constant $k_3$ as well as the elimination constants $w_1,w_2,w_3$ are positive.

The functions $f_1,f_2:[0,\infty)\to (0,\infty)$, which represent the negative feedback from CORT on CRH and ACTH, respectively, are assumed to be strictly decreasing, smooth and bounded on $[0,\infty)$. In particular, the results presented in this paper are also applicable when Hill functions are being used in the expression of the feedback functions \cite{Andersen_2013,Vinther_2011}:
\begin{equation}\label{func.hill}
f_1(u)=k_1\left(1-\eta\df{u^{\alpha}}{c^{\alpha}+u^{\alpha}}\right)\quad,\quad f_2(u)=k_2\left(1-\mu\df{u^{\alpha}}{c^{\alpha}+u^{\alpha}}\right),
\end{equation}
with $\alpha\geq 1$, $k_1,k_2>0$, $\eta,\mu\in(0,1)$, $c>0$. It is easy to verify that functions (\ref{func.hill}) satisfy all the properties mentioned above. However, it may be possible to model the negative feedback using different types of functions $f_1$ and $f_2$. In this paper, our aim is to obtain general results which will also be applicable to other choices of negative feedback functions, besides functions (\ref{func.hill}), often used in the literature.

In system (\ref{sys.hpa.dd}), the delay kernels $h_1,h_2,h_{31},h_{32}:[0,\infty)\to[0,\infty)$ are probability density functions representing the probability that a particular time delay occurs. They are assumed to be bounded, piecewise continuous and satisfy
\begin{equation}\label{delay.kernel.properties}
\int_0^{\infty}h(s)ds=1.
\end{equation}
The average delay of a delay kernel $h(t)$ is given by
$$\tau=\int_0^{\infty}sh(s)ds<\infty.$$
Two important classes of delay kernels, which are often used in the literature, are worth mentioning:
\begin{itemize}
\item Dirac kernels: $h(s)=\delta(s-\tau)$, where $\tau\geq 0$. In this particular case, the distributed delay is reduced to a discrete time delay:
$$\int_{-\infty}^t x(s)h(t-s)ds=\int_0^\infty x(t-s)\delta(s-\tau)ds=x(t-\tau).$$
\item Gamma kernels: $h(s)=\df{ s^{p-1}e^{-s/\beta}}{\beta^p\Gamma(p)}$, where $p,\beta>0$. The average delay of a Gamma kernel is $\tau=p\beta$.
\end{itemize}
The analysis of the mathematical models including particular classes of delay kernels (such as weak Gamma kernels with $p=1$ or strong Gamma kernels with $p=2$) may shed a light on how distributed delays affect the dynamics differently from discrete delays. However, in the modelling of real world phenomena, one usually does not have access to the exact distribution, and approaches using general kernels may be more useful \cite{Bernard_2001,Campbell_2009,Diekmann_2012,Yuan_2011}.

Initial conditions associated with system (\ref{sys.hpa.dd}) are as follows:
$$x_i(s)=\varphi_i(s),\quad \forall\, s\in(-\infty,0], \,\,i=1,2,3,$$
where $\varphi_i$  are bounded continuous functions defined on $(-\infty,0]$, with values in $[0,\infty)$.

\section{Existence of a unique equilibrium point}

An equilibrium point of system (\ref{sys.hpa.dd}) is a solution of the following algebraic system:
\begin{equation}
\left\{\begin{array}{l}
f_1(x_3)=w_1x_1,\\
f_2(x_3)x_1=w_2x_2,\\
k_3x_2=w_3x_3,
\end{array}\right.
\end{equation}
which is equivalent to
\begin{equation}\label{sys.eq.points}
\left\{\begin{array}{l}
x_1=\df{f_1(x_3)}{w_1},\\
x_2=\df{w_3}{k_3}x_3,\\
x_3=\df{k_3}{w_1w_2w_3}f_1(x_3)f_2(x_3).
\end{array}\right.
\end{equation}
Due to the properties of $f_1$ and $f_2$, the function $\df{k_3}{w_1w_2w_3}f_1(x)f_2(x)$ which appears in the right hand side of the last equation of system (\ref{sys.eq.points}), is strictly  positive and  strictly decreasing on $[0,\infty)$, and therefore, it has a unique fixed point $x^\star>0$. It follows that system (\ref{sys.hpa.dd}) has a unique equilibrium point
\begin{equation}\label{eq.state}
E=\left(\df{f_1(x^\star)}{w_1}, \df{w_3x^\star}{k_3},x^\star\right).
\end{equation}

In the following, necessary and sufficient conditions will be explored for the local asymptotic stability of the equilibrium point $E$ and the occurrence of limit cycles in a neighborhood of $E$ (due to Hopf bifurcations) that can explain the ultradian rhythm.

\section{Local stability analysis of system (\ref{sys.hpa.dd})}

In this section, considering general delay kernels, we seek to obtain delay independent
sufficient conditions for the local asymptotic stability of the equilibrium point $E$. Such results prove to be useful if one is unable to accurately estimate the time delays in system (\ref{sys.hpa.dd}).

{ Using the transformation $y_1(t)=x_1(t)-\df{f_1(x^\star)}{w_1}$, $y_2(t)=x_2(t)-\df{w_3x^\star}{k_3}$ and $y_3(t)=x_3(t)-x^\star$, the linearized system of (\ref{sys.hpa.dd}) at the equilibrium point $E$ is}:
\begin{equation}\label{sys.lin}
\left\{\begin{array}{l}
\dot y_1(t)=f_1'(x^\star)\int_{-\infty}^ty_3(s)h_{31}(t-s)ds-w_1y_1(t),\\
\dot y_2(t)=f_2(x^\star)\int_{-\infty}^ty_1(s)h_1(t-s)ds+\df{1}{w_1}f_1(x^\star)f_2'(x^\star)\int_{-\infty}^ty_3(s)h_{32}(t-s)ds-w_2y_2(t),\\
\dot y_3(t)=k_3\int_{-\infty}^ty_2(s)h_2(t-s)ds-w_3y_3(t).
\end{array}\right.
\end{equation}

The associated characteristic equation of the linearized system (\ref{sys.lin}) is:
\begin{equation}\label{eq.char}
( z+w_1)( z+w_2)( z+w_3)+a( z+w_1)H_2( z)H_{32}( z)+bH_1( z)H_2( z)H_{31}( z)=0,
\end{equation}
where $H_i( z)=\int_{0}^\infty e^{- z s}h_i(s)ds$ represent the Laplace transforms of the delay kernels $h_i$, $i\in\{1,2,31,32\}$ and
\begin{align}
\label{eq.a} a&=-\df{k_3}{w_1}f_1(x^\star)f_2'(x^\star)=-w_2w_3\df{x^\star f_2'(x^\star)}{f_2(x^\star)}>0,\\
\label{eq.b} b&=-k_3f_1'(x^\star)f_2(x^\star)=-w_1w_2w_3\df{x^\star f_1'(x^\star)}{f_1(x^\star)}>0.
\end{align}

The following inequalities will be useful for the theoretical analysis:
\begin{align*}
(I_1)\qquad & 8f_1(x^\star)+x^\star f_1'(x^\star)\geq 0;\\
(I_2)\qquad & 1+\frac{x^\star f_1'(x^\star)}{f_1(x^\star)}+\frac{x^\star f_2'(x^\star)}{f_2(x^\star)}> 0; \\
(\overline{I_2})\qquad & 1+\frac{x^\star f_1'(x^\star)}{f_1(x^\star)}+\frac{x^\star f_2'(x^\star)}{f_2(x^\star)}\leq 0.
\end{align*}

\begin{remark}\label{rem.Hill.1}
Consider the feedback function
$$
f_1(u)=k_1\left(1-\eta\df{u^{\alpha}}{c^{\alpha}+u^{\alpha}}\right)
$$
with $k_1>0$, $\eta\in(0,1)$, $c>0$, $\alpha\geq 1$. A straightforward computation yields:
$$8f_1(u)+uf_1'(u)=k_1\frac{8(1-\eta)u^{2\alpha}+(16-8\eta-\alpha\eta)u^\alpha c^\alpha+8c^{2\alpha}}{(c^\alpha+u^\alpha)^2}.$$
It can be easily seen that if $\alpha\leq 8$, we have
$$8f_1(u)+uf_1'(u)\geq k_1\frac{8(1-\eta)u^{2\alpha}+16(1-\eta)u^\alpha c^\alpha+8c^{2\alpha}}{(c^\alpha+u^\alpha)^2}\geq 0,$$
and hence, the inequality $8f_1(u)+uf_1'(u)\geq 0$ is satisfied for any $u\geq 0$ (regardless of the choice of parameters $k_1$, $\eta$ or $c$).

Therefore,  if $\alpha\leq 8$, inequality $(I_1)$ holds as well. It is worth mentioning that according to \cite{Murray_2002}, $\alpha>7$ is considered unphysiological.
\end{remark}

\begin{remark}\label{rem.Hill.2}
Consider the particular case of feedback functions given by (\ref{func.hill}), with $k_1,k_2>0$, $\eta,\mu\in(0,1)$, $\alpha\geq 1$ and with the constant $c$ given by
$$c=x^\star.$$
This assumption comes from the fact that the constant $c$ is chosen to be equal to the mean value of free cortisol \cite{Vinther_2011}, which, in turn, is equal to the last component of the equilibrium point $E$. In this case, the term that appears in the left hand-side of inequalities $(I_2)$ and $(\overline{I_2})$ becomes
$$1+\frac{x^\star f_1'(x^\star)}{f_1(x^\star)}+\frac{x^\star f_2'(x^\star)}{f_2(x^\star)}=1-\frac{\alpha}{2}\left[\frac{\eta}{2-\eta}+\frac{\mu}{2-\mu}\right].$$
It is important to note that in this case, inequalities $(I_2)$ and $(\overline{I_2})$ only depend on the parameters $\alpha,\eta,\mu$, and do not depend on the choice of the parameters $k_1,k_2$. For instance, if $\eta=\mu=0.5$, inequality $(I_2)$ is equivalent to $\alpha<3$.
\end{remark}

\begin{theorem}[Local asymptotic stability]\label{thm.stab} $ $
\begin{enumerate}
\item In the non-delayed case, if inequality $(I_1)$ is satisfied, then the equilibrium point $E$ of system (\ref{sys.hpa.dd}) is locally asymptotically stable.
\item For any delay kernels $h_i(t)$, $i\in\{1,2,31,32\}$, if inequality $(I_2)$ holds, then the equilibrium point $E$ of system (\ref{sys.hpa.dd}) is locally asymptotically stable.
\end{enumerate}
\end{theorem}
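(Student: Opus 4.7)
The plan is to study the characteristic equation (\ref{eq.char}) and show that, under the stated hypotheses, none of its roots lies in the closed right half-plane $\{\,z:\Re z\ge 0\,\}$; local asymptotic stability of $E$ then follows from standard results for linear functional differential equations with distributed delays.

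For Part~1 (non-delayed case), every kernel $h_i$ degenerates to $\delta_0$, so $H_i(z)\equiv 1$ and (\ref{eq.char}) becomes the cubic $z^3+pz^2+qz+r$ with coefficients
\[
p=w_1+w_2+w_3,\qquad q=w_1w_2+w_1w_3+w_2w_3+a,\qquad r=w_1w_2w_3+aw_1+b.
\]
All three are positive, so the Routh--Hurwitz criterion reduces the stability question to the single inequality $pq>r$. A direct expansion yields
\[
pq-r=2w_1w_2w_3+\sum_{i\ne j}w_i^2 w_j+a(w_2+w_3)-b,
\]
the sum running over ordered pairs $(i,j)\in\{1,2,3\}^2$ with $i\ne j$. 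The crucial step is to bound this symmetric sum below by $6w_1w_2w_3$ via AM--GM applied to its six positive summands (whose product equals $w_1^6 w_2^6 w_3^6$); combined with $a(w_2+w_3)>0$ and with the bound $b=-w_1w_2w_3\,x^\star f_1'(x^\star)/f_1(x^\star)\le 8w_1w_2w_3$ supplied by $(I_1)$, this delivers $pq-r>0$.

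For Part~2, suppose by contradiction that (\ref{eq.char}) has a root $z_0$ with $\Re z_0\ge 0$. Recall the standard bound $|H_i(z)|\le\int_0^\infty e^{-\Re z\,s}h_i(s)\,ds\le 1$, valid for $\Re z\ge 0$, and note that $|z_0+w_k|\ge w_k$ for each $k\in\{1,2,3\}$. Dividing (\ref{eq.char}) by the nonzero quantity $(z_0+w_1)(z_0+w_2)(z_0+w_3)$ and applying the triangle inequality yields
\[
1\le\frac{a\,|H_2(z_0)H_{32}(z_0)|}{|z_0+w_2||z_0+w_3|}+\frac{b\,|H_1(z_0)H_2(z_0)H_{31}(z_0)|}{|z_0+w_1||z_0+w_2||z_0+w_3|}\le\frac{a}{w_2w_3}+\frac{b}{w_1w_2w_3}.
\]
Using the expressions (\ref{eq.a})--(\ref{eq.b}) for $a$ and $b$, this last inequality is exactly $1+x^\star f_1'(x^\star)/f_1(x^\star)+x^\star f_2'(x^\star)/f_2(x^\star)\le 0$, which contradicts $(I_2)$.

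The main obstacle I anticipate is arranging the AM--GM estimate in Part~1 so that the constant $8$ appearing in $(I_1)$ emerges exactly, coming from $6$ (AM--GM on the six monomials) plus $2$ (from the leading $2w_1w_2w_3$); this also makes plausible that the constant $8$ is sharp in the limit $w_1=w_2=w_3$, $a\to 0$. Part~2 is, by contrast, a delay-kernel independent small-gain estimate whose content is entirely captured by the uniform bound $|H_i|\le 1$ on the closed right half-plane, so the argument applies unchanged to any admissible probability density kernel, including Dirac and Gamma kernels.
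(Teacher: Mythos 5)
Your proposal is correct and follows essentially the same route as the paper: Part~1 uses the Routh--Hurwitz condition $c_1c_2>c_3$ with the AM--GM bound $\sum_{i\ne j}w_i^2w_j\ge 6w_1w_2w_3$ and $(I_1)$ in the form $b\le 8w_1w_2w_3$, and Part~2 is the same delay-independent small-gain estimate based on $|H_i(z)|\le 1$ and $|z+w_k|\ge w_k$ on the closed right half-plane, merely phrased as a contradiction after normalizing by $(z+w_1)(z+w_2)(z+w_3)$ instead of as the paper's direct inequality $|\psi(z)|<|\varphi(z)|$.
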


\begin{proof} 1. In the non-delayed case, the characteristic equation (\ref{eq.char}) becomes:
\begin{equation}
\la^3+c_1\la^2+c_2\la+c_3=0,
\end{equation} where
\begin{align*}
c_1&=w_1+w_2+w_3>0, \\
c_2&=w_1w_2+w_2w_3+w_1w_3+a>0,\\
c_3&=w_1w_2w_3+aw_1+b>0.
\end{align*}
We can easily compute
$$
c_1c_2-c_3=a(w_2+w_3)+\gamma(w_1,w_2,w_3)+2w_1w_2w_3-b,
$$
where
$$
\gamma(w_1,w_2,w_3)=w_1^2w_2+w_1w_2^2+w_1^2w_3+w_1w_3^2+w_2w_3^2+w_2^2w_3.
$$
By the inequality of arithmetic and geometric means, we deduce
$$\gamma(w_1,w_2,w_3)\geq 6w_1w_2w_3,\quad \textrm{for any }w_1,w_2,w_3>0,$$
and hence, based on (\ref{eq.b}) and $(I_1)$, we obtain:
\begin{align*}
c_1c_2-c_3&\geq a(w_2+w_3)+8w_1w_2w_3-b=\\
&=a(w_2+w_3)+w_1w_2w_3\left(8+\df{x^\star f_1'(x^\star)}{f_1(x^\star)}\right)>0.
\end{align*}
By the Routh-Hurwitz stability test, the equilibrium point $E$ is asymptotically stable.

2. From (\ref{eq.a}) and (\ref{eq.b}) it can be easily seen that inequality $(I_2)$ is equivalent to
$$a+\df{b}{w_1}< w_2w_3.$$
The characteristic equation (\ref{eq.char}) can be written as $$\varphi(z)=\psi(z),$$
where the functions $\varphi$ and $\psi$ are given by
\begin{align*}
\varphi(z)&=-(z+w_1)(z+w_2)(z+w_3),\\
\psi(z)&=a(z+w_1)H_2(z)H_{32}(z)+bH_1(z)H_2(z)H_{31}(z).
\end{align*}
These functions are holomorphic in the right half-plane.

Let $z\in \mathbb{C}$ with $\Re(z)\geq 0$. For any $i\in\{1,2,31,32\}$, from (\ref{delay.kernel.properties}) we obtain:
\begin{align*}
|H_i(z)|=\left|\int_{0}^\infty e^{-z s}h_i(s)ds\right|\leq\int_0^\infty |e^{-z s}|h_i(s)ds=\int_0^\infty e^{-\Re(z) s}h_i(s)ds\leq \int_0^\infty h_i(s)ds=1,
\end{align*}
and hence, we have:
\begin{align*}
|\psi(z)|&\leq a|z+w_1||H_2(z)||H_{32}(z)|+b|H_1(z)||H_2(z)||H_{31}(z)|\leq a|z+w_1|+b\\
&=|z+w_1|\left(a+\frac{b}{|z+w_1|}\right)=|z+w_1|\left( a+\frac{b}{\sqrt{|z|^2+w_1^2+2\Re(z)w_1}}\right)\\
&\leq|z+w_1|\left( a+\frac{b}{w_1}\right)<|z+w_1| w_2w_3 \\
&\leq|z+w_1| \left[(|z|^2+w_2^2+2\Re(z)w_2)(|z|^2+w_3^2+2\Re(z)w_3)\right]^{1/2}\\
&=|z+w_1||z+w_2||z+w_3|=|\varphi(z)|.
\end{align*}
Hence, the inequality $|\psi(z)|<|\varphi(z)|$ holds for any $z\in \mathbb{C}$, $\Re(z)\geq 0$. Therefore, the characteristic equation $\varphi(z)=\psi(z)$ does not have any root in the right half-plane (or the imaginary axis). This means that all the roots of the characteristic equation (\ref{eq.char}) have strictly negative real part, and the equilibrium $E$ is asymptotically stable.
\end{proof}

\begin{corollary}
For any delay kernels $h_i(t)$, $i\in\{1,2,31,32\}$, if the equilibrium point $E$ of system (\ref{sys.hpa.dd}) is unstable, then inequality $(\overline{I_2})$ holds. In other words, inequality $(\overline{I_2})$ is a necessary condition for the occurrence of bifurcations in system (\ref{sys.hpa.dd}).
\end{corollary}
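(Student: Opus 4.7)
The plan is to observe that the corollary is simply the contrapositive of the second assertion of Theorem \ref{thm.stab}, once one notices that $(I_2)$ and $(\overline{I_2})$ form a logical dichotomy: the quantity $1+\frac{x^\star f_1'(x^\star)}{f_1(x^\star)}+\frac{x^\star f_2'(x^\star)}{f_2(x^\star)}$ is a well-defined real number, so either it is strictly positive (which is $(I_2)$) or it is non-positive (which is $(\overline{I_2})$), with no other possibility. Hence no new analytical work is required; the task is to arrange the logic carefully.

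First I would make the dichotomy between $(I_2)$ and $(\overline{I_2})$ explicit, so that ``not $(I_2)$'' is rigorously identified with $(\overline{I_2})$. Next I would invoke Theorem \ref{thm.stab}(2), which states that $(I_2)$ is a sufficient condition for local asymptotic stability of $E$ for arbitrary admissible delay kernels $h_1,h_2,h_{31},h_{32}$. Taking the contrapositive: if $E$ fails to be locally asymptotically stable---in particular, if it is unstable---then $(I_2)$ cannot hold, hence $(\overline{I_2})$ must hold. This gives the first assertion of the corollary.

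For the second assertion (necessity for bifurcations), I would point out that any local bifurcation from the equilibrium $E$, including a Hopf bifurcation, requires that $E$ loses asymptotic stability, i.e., that at the bifurcation parameter value at least one root of the characteristic equation (\ref{eq.char}) crosses into the closed right half-plane. By Theorem \ref{thm.stab}(2), such a crossing is impossible as long as $(I_2)$ is satisfied, because the Rouché-type comparison $|\psi(z)|<|\varphi(z)|$ on $\Re(z)\geq 0$ excludes roots in that region. Consequently, $(\overline{I_2})$ is indeed a necessary condition for any bifurcation of the equilibrium $E$ to occur in system (\ref{sys.hpa.dd}).

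The only subtlety---and arguably the ``main obstacle'' if one wanted to be pedantic---is that part 2 of Theorem \ref{thm.stab} gives sufficiency of $(I_2)$ for \emph{asymptotic} stability, not for mere Lyapunov stability; its contrapositive therefore yields $(\overline{I_2})$ from loss of asymptotic stability (covering instability as well as any marginal cases relevant to bifurcation), which is exactly what the corollary and its bifurcation-theoretic rephrasing need. No further computation is required.
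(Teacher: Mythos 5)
Your proposal is correct and is exactly the argument the paper intends: the corollary is the contrapositive of Theorem \ref{thm.stab}(2), using the fact that $(I_2)$ and $(\overline{I_2})$ exhaust all possibilities for the sign of $1+\frac{x^\star f_1'(x^\star)}{f_1(x^\star)}+\frac{x^\star f_2'(x^\star)}{f_2(x^\star)}$. The paper states the corollary without proof precisely because it follows immediately in this way, so no comparison of methods is needed.
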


\begin{remark}\label{rem.Hill.3}
If $f_1$ is given by (\ref{func.hill}) with $\alpha\leq 8$, it follows from Remark \ref{rem.Hill.1} that inequality $(I_1)$ holds. Based on Theorem \ref{thm.stab}, the equilibrium point $E$ is asymptotically stable in the non-delayed case. This improves the sufficient condition $\alpha\leq 7.46$ presented in \cite{Vinther_2011}. As $\alpha>7$ is considered unphysiological \cite{Murray_2002}, the equilibrium point $E$ is locally asymptotically stable for all realistic values of the parameters if no time delays are considered.

Moreover, if $f_1,f_2$ are given by (\ref{func.hill}) (such as in Remark \ref{rem.Hill.2}), with $c=x^\star$, and the following inequality is satisfied
$$\frac{2}{\alpha}>\frac{\eta}{2-\eta}+\frac{\mu}{2-\mu},$$
the equilibrium point $E$ of system (\ref{sys.hpa.dd}) is asymptotically stable for any choice of the delay kernels $h_i$, $i\in\{1,2,31,32\}$ and of the parameters $k_1,k_2,k_3,w_1,w_2,w_3$.

In the special case $\alpha=1$ considered in \cite{Savic_2005,Savic_2006}, it is easy to see that the above inequality is fulfilled for any $\eta,\mu\in(0,1)$, implying that the equilibrium point $E$ is locally asymptotically stable for any choice of the delay kernels $h_i$, $i\in\{1,2,31,32\}$ and of the parameters $k_1,k_2,k_3,w_1,w_2,w_3$.
\end{remark}

\section{Bifurcation analysis of system (\ref{sys.hpa.dd})}

The bifurcation analysis presented in this section takes into consideration the average time delays of the delay kernels $h_i$, $i\in\{1,2,31,32\}$. A first observation is that the time required by CRH to travel from the hypothalamus to the pituitary through the hypophyseal portal blood vessels is extremely short \cite{Bairagi_2008} and therefore, in most numerical simulations the average time delay $\tau_1$ is considered close to $0$. Moreover, $\tau_{31}$ and $\tau_{32}$ are comparable, as they represent the average time delays due to the negative feedback effect of the adrenal glands on the hypothalamus and pituitary, respectively, which are closely situated.

For this reason, in this section, we will assume for simplicity that
$$
H_{32}(z)=H_1(z)H_{31}(z),
$$
and we denote
$$H(z)=H_2(z)H_{32}(z)=H_1(z)H_2(z)H_{31}(z).$$
In fact, $H(z)$ is the Laplace transform of the convolution of the delay kernels $h_2$ and $h_{32}$ defined as
$$h(t)=\int_0^t h_2(s)h_{32}(t-s)ds,$$
with the mean
\begin{equation}\label{eq.tau}
\tau=\int_0^{\infty}sh(s)ds=\tau_2+\tau_{32},
\end{equation}
where $\tau_2$ and $\tau_{32}$ represent the average delays of the kernels $h_2$ and $h_{32}$ respectively. This results from the fact that the probability density function of the sum of two independent random variables is the convolution of their separate probability density functions.

Therefore, the characteristic equation (\ref{eq.char}) becomes
\begin{equation}\label{eq.char.bif}
( z+w_1)( z+w_2)( z+w_3)+[a(z+w_1)+b]H(z)=0,
\end{equation}
or equivalently:
$$H(z)^{-1}=Q(z),$$
where
$$Q(z)=-\frac{aw_1+b+a z}{( z+w_1)( z+w_2)( z+w_3)}.$$
The function $Q(z)$ defined above will play an important role in the bifurcation analysis presented in this section. We summarize its most important properties in the following Lemma.

\begin{lemma}\label{lem.Q}
The function
$$\om\mapsto|Q(i\om)|=\sqrt{\frac{a^2\om^2+(aw_1+b)^2}{(\om^2+w_1^2)(\om^2+w_2^2)(\om^2+w_3^2)}}$$
is strictly decreasing on $[0,\infty)$ and the equation
$$|Q(i\om)|=1$$
has a unique positive real root $\om_0$ if and only if inequality $(\overline{I2})$ is satisfied.

Moreover, the following inequality holds:
$$\Im\left(\df{Q'(i\om)}{Q(i\om)}\right)>0\qquad\forall\,\om>0.$$
\end{lemma}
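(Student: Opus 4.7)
The plan is to reduce all three assertions to one elementary observation: since $a,b,w_1>0$ we have $(aw_1+b)^2>a^2w_1^2$, whence
$$\frac{a^2}{a^2 t+(aw_1+b)^2}<\frac{1}{t+w_1^2}\qquad\text{for every }t\geq 0.$$
This single estimate, applied with two different substitutions, drives all three parts of the lemma.

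For the monotonicity claim, I would set $u=\omega^2$ so that $|Q(i\omega)|^2=g(u)$ with $g$ the obvious rational function of $u$, and then write, by logarithmic differentiation,
$$\frac{g'(u)}{g(u)}=\frac{a^2}{a^2u+(aw_1+b)^2}-\sum_{j=1}^{3}\frac{1}{u+w_j^2}.$$
The key estimate cancels the first summand against the $j=1$ term, leaving two strictly negative contributions, so $g'(u)<0$ and strict monotonicity of $|Q(i\omega)|$ on $[0,\infty)$ follows.

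For the existence claim, I would combine this strict monotonicity with $g(0)=(aw_1+b)^2/(w_1w_2w_3)^2$ and $g(u)\to 0$ as $u\to\infty$ to conclude that $g(u)=1$ has a unique positive solution precisely when $g(0)>1$, i.e.\ when $aw_1+b>w_1w_2w_3$. Translating via the formulas (\ref{eq.a}) and (\ref{eq.b}) for $a$ and $b$, this is exactly inequality $(\overline{I_2})$. The only delicate point is the strict vs.\ non-strict distinction: a strictly positive root requires the strict version of $(\overline{I_2})$, and I would flag this explicitly in the write-up.

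For the last inequality, I would perform a parallel logarithmic differentiation of $Q$ itself,
$$\frac{Q'(z)}{Q(z)}=\frac{a}{aw_1+b+az}-\sum_{j=1}^{3}\frac{1}{z+w_j},$$
evaluate at $z=i\omega$, and take imaginary parts. The same key estimate shows that the single resulting negative term is strictly dominated in modulus by the $j=1$ contribution of the positive sum, and the remaining two terms of that sum are themselves positive; strict positivity of $\Im(Q'(i\omega)/Q(i\omega))$ follows. I anticipate no real obstacle beyond the bookkeeping for the strict/non-strict calibration in part (2) noted above.
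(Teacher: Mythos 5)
Your proof is correct, and on two of the three claims it takes a genuinely different route from the paper's. For the monotonicity, the paper does not differentiate at all: it splits
$$|Q(i\om)|^2=\frac{a^2}{(\om^2+w_2^2)(\om^2+w_3^2)}+\frac{2abw_1+b^2}{(\om^2+w_1^2)(\om^2+w_2^2)(\om^2+w_3^2)}$$
(the printed version has a typo in the second numerator) and observes that each summand is positive and manifestly strictly decreasing. Your logarithmic differentiation together with the estimate $(aw_1+b)^2>a^2w_1^2$ reaches the same conclusion with one extra line of calculus; both are sound. The middle claim (unique positive root iff $aw_1+b>w_1w_2w_3$) is handled identically in both arguments. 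For the final inequality the paper again avoids computing $\Im\left(Q'(i\om)/Q(i\om)\right)$ directly: it uses the identity
$$\frac{d}{d\om}|Q(i\om)|^2=-2|Q(i\om)|^2\,\Im\left(\frac{Q'(i\om)}{Q(i\om)}\right),$$
so that positivity of the imaginary part falls out of the already-established strict monotonicity with no new work. Your direct evaluation of the partial-fraction expansion of $Q'/Q$ at $z=i\om$ is equally valid, and has the merit of exposing that all three assertions rest on the single fact $b>0$; the paper's identity is slicker in that part three costs nothing once part one is done. Finally, your flag about strict versus non-strict inequality is well taken: a \emph{positive} root of $|Q(i\om)|=1$ requires $aw_1+b>w_1w_2w_3$ strictly, whereas $(\overline{I_2})$ as stated is non-strict; the paper silently substitutes the strict version at this point, so your explicit calibration is an improvement rather than a deviation.
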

\begin{proof}
We have
$$
|Q(i\om)|^2=\frac{a^2}{(\om^2+w_2^2)(\om^2+w_3^2)}+\frac{2aw_1+b^2}{(\om^2+w_1^2)(\om^2+w_2^2)(\om^2+w_3^2)}
$$
and it is easy to see that $\om\mapsto|Q(i\om)|$ is strictly decreasing on $[0,\infty)$, approaching $0$ as $\om\rightarrow\infty$. Therefore, the equation $|Q(i\om)|=1$ has a unique solution if and only if $|Q(0)|>1$, or $w_1w_2w_3<aw_1+b$, which is equivalent to inequality $(\overline{I2})$.

Moreover, we have:
\begin{align*}
\frac{d}{d\om}|Q(i\om)|^2&=\frac{d}{d\om}\left[Q(i\om)\overline{Q(i\om)}\right]=2\Re\left[\overline{Q(i\om)}\frac{d}{d\om}Q(i\om)\right]\\
&=2\Re\left[i\overline{Q(i\om)}Q'(i\om)\right]=-2\Im\left[\overline{Q(i\om)}Q'(i\om)\right]\\
&=-2|Q(i\om)|^2\Im\left(\df{Q'(i\om)}{Q(i\om)}\right).
\end{align*}
As $\om\mapsto |Q(i\om)|^2$ is strictly decreasing on $(0,\infty)$, its derivative is strictly negative, and hence, $\Im\left(\df{Q'(i\om)}{Q(i\om)}\right)>0$, for any $\om>0$.
\end{proof}

\begin{remark} A simple computation shows that
$$\Re[Q(i\om)]=\frac{a\om^4+[b(w_1+w_2+w_3)+a(w_1^2-w_2w_3)]\om^2-w_1w_2w_3(aw_1+b)}{(\om^2+w_1^2)(\om^2+w_2^2)(\om^2+w_3^2)}.$$
This formula will be useful in the framework of the bifurcation results that follow.
\end{remark}

Due to the high complexity of the problem, we are unable to perform the bifurcation analysis for general kernels $h_i$, $i\in\{1,2,31,32\}$. Thus, we focus our attention on the following cases:
\begin{itemize}
\item[1.] all delay kernels are Dirac kernels;
\item[2.] all delay kernels are Gamma kernels;
\item[3.] some delay kernels are Dirac kernels while others are Gamma kernels.
\end{itemize}

\subsection{Dirac kernels}

Consider that all the delay kernels are Dirac kernels: $h_1(t)=\delta(t-\tau_1)$, $h_2(t)=\delta(t-\tau_2)$, $h_{31}(t)=\delta(t-\tau_{31})$, $h_{32}(t)=\delta(t-\tau_{32})$, where $\tau_1,\tau_2,\tau_{31},\tau_{32}\geq 0$ satisfy the property
\begin{equation}\label{cond.dirac}
\tau_2+\tau_{32}=\tau_1+\tau_2+\tau_{31}=\tau>0.
\end{equation}
In this case, the characteristic equation (\ref{eq.char.bif}) becomes:
\begin{equation}\label{eq.char.Dirac}
(z+w_1)( z+w_2)( z+w_3)+[a( z+w_1)+b]e^{-\tau z}=0,
\end{equation}
or equivalently:
$$e^{\tau z}=Q(z).$$
We choose $\tau$ as bifurcation parameter.

\begin{theorem}[Hopf bifurcations in the case of Dirac kernels]\label{thm.bif.dirac}
Assume that inequalities $(I_1)$ and $(\overline{I_2})$ are satisfied.
For any $p\in\mathbb{Z}^+$, consider
\begin{equation}\label{eq.tau.Dirac}
\tau_p=\frac{\arccos\left[\Re(Q(i\om_0))\right]+2p\pi}{\om_0},
\end{equation}
where $\om_0>0$ is given by Lemma \ref{lem.Q}. The equilibrium point $E$ is asymptotically stable if any only if $\tau\in[0,\tau_0)$. For any $p\in\mathbb{Z}^+$, at $\tau=\tau_p$, system (\ref{sys.hpa.dd}) undergoes a Hopf bifurcation at the equilibrium point $E$.
\end{theorem}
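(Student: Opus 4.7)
My plan is to track the characteristic roots of~(\ref{eq.char.Dirac}) as $\tau$ grows from $0$, pinpoint the values at which a root crosses the imaginary axis, and show each crossing is transversal in the rightward direction. Since $(I_1)$ holds, Theorem~\ref{thm.stab}(1) provides asymptotic stability at $\tau=0$: every root of the undelayed polynomial lies in the open left half-plane. By continuous dependence of the roots on $\tau$, stability can only be lost via an imaginary-axis crossing; note that $z=0$ is never a root because the constant term $w_1w_2w_3+aw_1+b$ is strictly positive.

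To locate the crossings I would substitute $z=i\om$, $\om>0$, into $e^{\tau z}=Q(z)$. Taking moduli yields $|Q(i\om)|=1$, which by Lemma~\ref{lem.Q} combined with $(\overline{I_2})$ has the unique positive root $\om_0$. Matching arguments in $e^{i\tau\om_0}=Q(i\om_0)$ then forces $\tau$ into the discrete sequence $\{\tau_p\}_{p\in\mathbb{Z}^+}$ of the statement, using the arccos branch selected by the sign of $\Im Q(i\om_0)$. For transversality, I would implicitly differentiate $e^{\tau z}=Q(z)$ in $\tau$ to get
$$\left(\df{dz}{d\tau}\right)^{-1}=\df{Q'(z)}{z\,Q(z)}-\df{\tau}{z}.$$
Evaluating at $z=i\om_0$ and taking real parts, the $\tau/z$ term vanishes, leaving
$$\Re\!\left[\left(\df{dz}{d\tau}\right)^{-1}\right]=\df{1}{\om_0}\,\Im\!\left(\df{Q'(i\om_0)}{Q(i\om_0)}\right)>0$$
by the final inequality of Lemma~\ref{lem.Q}. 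Hence the conjugate imaginary pair crosses left-to-right at every $\tau_p$.

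Combining these facts, the number of characteristic roots in the open right half-plane is $0$ for $\tau\in[0,\tau_0)$ and strictly increases by $2$ at each $\tau_p$, giving exactly the asserted stability range and a Hopf bifurcation at every $\tau_p$. Simplicity of the imaginary pair, needed for the Hopf theorem, follows from the strict monotonicity of $|Q(i\om)|$, which rules out further imaginary roots. The main obstacle I foresee is the transversality computation: it becomes clean only once the implicit derivative is cast in terms of $Q'/Q$, at which point Lemma~\ref{lem.Q} finishes the job. A secondary care-point is choosing the correct arccos branch, which is settled by a direct sign check on $\Im Q(i\om_0)$ from the explicit formula for $Q$ — an opposite sign would merely re-index the $\tau_p$ by a $2\pi/\om_0$ shift without altering the set of critical delays.
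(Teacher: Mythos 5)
Your proposal is correct and follows essentially the same route as the paper: take moduli in $e^{i\om\tau}=Q(i\om)$ to isolate $\om_0$ via Lemma \ref{lem.Q}, read off the critical delays from the argument, and establish transversality by implicitly differentiating $e^{\tau z}=Q(z)$ so that the sign reduces to $\Im\left(\df{Q'(i\om_0)}{Q(i\om_0)}\right)>0$. The only cosmetic difference is that you compute $\Re\left[(dz/d\tau)^{-1}\right]$ instead of $\Re(dz/d\tau)$ directly (the paper obtains the same positive quantity divided by a positive modulus squared), and you explicitly flag the arccos branch issue, which the paper leaves implicit.
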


\begin{proof}
Equation (\ref{eq.char.Dirac}) has a pair of complex conjugated solutions $ z=\pm i\om$ on the imaginary axis ($\om>0$) if and only if
\begin{equation}\label{eq.1}
e^{i\om\tau}=Q(i\om).
\end{equation}
Taking the absolute value in (\ref{eq.1}) we obtain that $|Q(i\om)|=1$, and hence, we obtain that $\om=\om_0$, where $\om_0$ is the unique positive real solution given by Lemma \ref{lem.Q}.

From (\ref{eq.1}) we can also deduce
$$\cos(\tau\om_0)=\Re[Q(i\om_0)],$$
which leads us to the values $\tau_p$ given by (\ref{eq.tau.Dirac}).

From Theorem \ref{thm.stab}, we know that the equilibrium point $E$ is asymptotically stable if $\tau=0$ (since inequality $(I_1)$ holds). The number of the roots of the characteristic equation from the left half-plane can change only if a root (or pair of complex conjugated roots) crosses the imaginary axis, or more precisely, whenever $\tau=\tau_p$, $p\in\mathbb{Z}^+$ (in which case, $\pm i \omega_0$ are roots of the characteristic equation). Therefore, for any $\tau\in[0,\tau_0)$ the equilibrium point $E$ is asymptotically stable.

Let $ z_p(\tau)$ denote the root of the characteristic equation (\ref{eq.char.Dirac}) satisfying $ z_p(\tau_p)=i\om_0$. The function $z_p(\tau)$ satisfies $$e^{\tau  z_p(\tau)}=Q( z_p(\tau)).$$
Taking the derivative with respect to $\tau$, it follows that
$$(\tau z_p'(\tau)+ z_p(\tau))e^{\tau  z_p(\tau)}= z_p'(\tau)Q'( z_p(\tau)).$$
We obtain
$$ z_p'(\tau)=\frac{ z_p(\tau)}{Q'( z_p(\tau))e^{-\tau  z_p(\tau)}-\tau}=\frac{ z_p(\tau)}{\frac{Q'( z_p(\tau))}{Q( z_p(\tau))}-\tau},$$
and hence:
$$ z_p'(\tau_p)=\frac{i\om_0}{\frac{Q'(i\om_0)}{Q(i\om_0)}-\tau_p}.$$
Taking the real part and using Lemma \ref{lem.Q}, we obtain:
$$\left.\frac{d\Re( z_p)}{d\tau}\right|_{\tau=\tau_p}=\frac{\om_0\Im\left(\frac{Q'(i\om_0)}{Q(i\om_0)}\right)}{\left|\frac{Q'(i\om_0)}{Q(i\om_0)}-\tau_p\right|^2}>0.$$
This nondegeneracy condition for the Hopf bifurcation shows that the equilibrium point $E$ can only be asymptotically stable if and only if $\tau\in[0,\tau_0)$ and that for any $p\in\mathbb{Z}^+$, at $\tau=\tau_p$, system (\ref{sys.hpa.dd}) undergoes a Hopf bifurcation at the equilibrium point $E$.
\end{proof}

\subsection{Gamma kernels}

We now consider that the delay kernels are Gamma distribution kernels: $h_1(t)=\df{t^{n_1-1}e^{-t/\beta}}{\beta^{n_1}(n_1-1)!}$, $h_2(t)=\df{t^{n_2-1}e^{-t/\beta}}{\beta^{n_2}(n_2-1)!}$, $h_{31}(t)=\df{t^{n_{31}-1}e^{- t/\beta}}{\beta^{n_{31}} (n_{31}-1)!}$,  $h_{32}(t)=\df{ t^{n_{32}-1}e^{-t/\beta}}{\beta^{n_{32}}(n_{32}-1)!}$, where $\beta>0$ and $n_1,n_2,n_{31},n_{32}\in\mathbb{Z}^+\setminus\{0\}$ satisfy:
$$n_2+n_{32}=n_1+n_2+n_{31}=n\geq 2.$$
The characteristic equation (\ref{eq.char}) becomes:
\begin{equation}\label{eq.char.Gamma}
( z+w_1)( z+w_2)( z+w_3)+\frac{a( z+w_1)+b}{(\beta z+1)^n}=0,
\end{equation}
or equivalently
$$(\beta z+1)^n=Q(z).$$
Choosing $\beta$ as bifurcation parameter, we obtain the following result:

\begin{theorem}[Hopf bifurcations in the case of Gamma kernels]\label{thm.bif.gamma}
Assume that inequalities $(I_1)$ and $(\overline{I_2})$ are satisfied.
Let $\om_n$ denote the largest real root of the equation
\begin{equation}\label{eq.omega.Gamma}
T_n\left(\df{1}{|Q(i\om)|^{1/n}}\right)=\frac{\Re(Q(i\om))}{|Q(i\om)|}
\end{equation}
from the interval $(0,\om_0)$, where $T_n$ is the Chebyshev polynomial of the first kind of order $n$, and consider
\begin{equation}\label{eq.beta.Gamma}
\beta_n=\frac{1}{\om_n}\sqrt{|Q(i\om_n)|^{2/n}-1}.
\end{equation}
The equilibrium point $E$ is asymptotically stable if $\beta\in(0,\beta_n)$. At $\beta=\beta_n$, system (\ref{sys.hpa.dd}) undergoes a Hopf bifurcation at the equilibrium point $E$.
\end{theorem}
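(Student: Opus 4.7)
The plan is to locate all purely imaginary roots $z = i\omega$ (with $\omega > 0$) of the characteristic equation $(\beta z + 1)^n = Q(z)$, identify the smallest positive $\beta$ at which such a root appears, and verify transversal crossing of the imaginary axis. Writing $1 + i\beta\omega = r e^{i\phi}$ with $r = \sqrt{1+\beta^2\omega^2}$ and $\phi = \arctan(\beta\omega) \in (0, \pi/2)$, the equation $r^n e^{in\phi} = Q(i\omega)$ decouples into a modulus equation $(1 + \beta^2\omega^2)^{n/2} = |Q(i\omega)|$, which solves directly for $\beta$ as in (\ref{eq.beta.Gamma}), and an argument equation $n\phi \equiv \arg Q(i\omega) \pmod{2\pi}$. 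Substituting $\cos\phi = 1/|Q(i\omega)|^{1/n}$ (from the modulus equation) into the Chebyshev identity $T_n(\cos\phi) = \cos(n\phi)$, together with $\cos(\arg Q(i\omega)) = \Re(Q(i\omega))/|Q(i\omega)|$, yields exactly (\ref{eq.omega.Gamma}). Since the modulus equation requires $|Q(i\omega)| \geq 1$, Lemma \ref{lem.Q} restricts attention to $\omega \in (0, \omega_0]$.

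Next, I would pin down the starting configuration at $\beta = 0$: the characteristic equation collapses to the non-delayed cubic, whose roots have strictly negative real parts under $(I_1)$ by Theorem \ref{thm.stab}.1. Continuous dependence of the roots on $\beta$ then guarantees stability persists until a root first touches the imaginary axis. To identify this first crossing, I would establish strict monotonicity of the map $\omega \mapsto \beta(\omega) := \frac{1}{\omega}\sqrt{|Q(i\omega)|^{2/n} - 1}$ on $(0, \omega_0)$: Lemma \ref{lem.Q} gives that $|Q(i\omega)|$ decreases strictly from $|Q(0)|>1$ to $1$ on this interval, so the radical decreases strictly to $0$, while $1/\omega$ also decreases; together $\beta(\omega)$ is a strictly decreasing bijection from $(0, \omega_0)$ onto $(0, \infty)$. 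Hence the largest root $\omega_n$ of (\ref{eq.omega.Gamma}) in $(0, \omega_0)$ corresponds to the smallest admissible bifurcation value $\beta_n = \beta(\omega_n)$, so that for $\beta \in (0, \beta_n)$ no purely imaginary roots exist and $E$ remains asymptotically stable.

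For the Hopf transversality condition at $\beta = \beta_n$, let $z(\beta)$ denote the branch of roots with $z(\beta_n) = i\omega_n$. Implicit differentiation of $(1 + \beta z)^n = Q(z)$ gives $n(1+\beta z)^{n-1}(z + \beta z') = Q'(z) z'$; using $(1+\beta z)^n = Q(z)$ to replace $n(1+\beta z)^{n-1}/Q(z)$ by $n/(1+\beta z)$, one obtains
\begin{equation*}
z'(\beta) = \frac{n z (1 + \beta z)}{(1+\beta z)\, Q'(z)/Q(z) \cdot (1+\beta z) - n\beta(1+\beta z)} \;=\; \frac{n z}{Q'(z)/Q(z) \cdot (1+\beta z) - n\beta}.
\end{equation*}
Evaluating at $(\beta_n, i\omega_n)$ and taking real parts, I would exploit $\Im(Q'(i\omega_n)/Q(i\omega_n)) > 0$ from Lemma \ref{lem.Q} together with the fact that $1 + i\beta_n\omega_n$ lies in the open first quadrant, to conclude $\Re(z'(\beta_n)) > 0$, in direct analogy with the computation in Theorem \ref{thm.bif.dirac}.

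The main obstacle I anticipate is that equation (\ref{eq.omega.Gamma}) encodes only the cosine of the argument condition and therefore may admit spurious roots where the sine mismatches, i.e., where $\sin(n\phi)$ and $\Im Q(i\omega)/|Q(i\omega)|$ have opposite signs; one must verify that the selected $\omega_n$ genuinely yields $(1+i\beta_n\omega_n)^n = Q(i\omega_n)$ and not just the real-part identity. A secondary nuisance is bookkeeping in the transversality calculation, whose algebra is heavier than in the Dirac case because of the factor $(1+\beta z)^{n-1}$; careful use of the characteristic identity is needed to reduce it to a form where Lemma \ref{lem.Q} can be applied cleanly.
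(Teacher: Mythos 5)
Your overall strategy coincides with the paper's: decouple $(1+i\beta\om)^n=Q(i\om)$ into a modulus equation giving (\ref{eq.beta.Gamma}) and an argument equation whose real part yields the Chebyshev equation (\ref{eq.omega.Gamma}); anchor stability at $\beta=0$ via $(I_1)$ and Theorem \ref{thm.stab}; observe that $\om\mapsto\frac{1}{\om}\sqrt{|Q(i\om)|^{2/n}-1}$ is strictly decreasing on $(0,\om_0)$, so the largest admissible $\om$ gives the first crossing; and differentiate the characteristic identity implicitly. Your formula for $z'(\beta)$ agrees with the paper's.

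The genuine gap is in your justification of transversality. Taking the real part of $z'(\beta_n)$ at $z=i\om_n$ produces a numerator proportional to $\Im\bigl((1+i\beta_n\om_n)\tfrac{Q'(i\om_n)}{Q(i\om_n)}\bigr)=\beta_n\om_n\,\Re\bigl(\tfrac{Q'(i\om_n)}{Q(i\om_n)}\bigr)+\Im\bigl(\tfrac{Q'(i\om_n)}{Q(i\om_n)}\bigr)$, and Lemma \ref{lem.Q} controls only the second summand. The first need not be nonnegative: since $Q(z)=-\frac{az+aw_1+b}{(z+w_1)(z+w_2)(z+w_3)}$, one has $\Re\bigl(\tfrac{Q'(i\om)}{Q(i\om)}\bigr)=\frac{a(aw_1+b)}{a^2\om^2+(aw_1+b)^2}-\sum_{j}\frac{w_j}{\om^2+w_j^2}$, which is negative at $\om=0$ (because $a/(aw_1+b)<1/w_1$) and typically on a whole interval of frequencies. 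Hence "$1+i\beta_n\om_n$ lies in the first quadrant together with $\Im(Q'/Q)>0$" does not yield the conclusion; this is precisely where the Gamma case is harder than the Dirac case, in which the factor $(1+\beta z)$ is absent and the numerator is exactly $\om_0\Im(Q'/Q)>0$. (The paper itself only asserts positivity of $\beta_n\om_n\Re(Q'/Q)+\Im(Q'/Q)$ via ``a laborious computation,'' so the step is delicate, but your sketch as written would not close it.) On the positive side, your concern about spurious roots of the cosine-only equation (\ref{eq.omega.Gamma}) is well placed and is passed over in silence by the paper; note that such roots can only shrink $\beta_n$, so they threaten the Hopf claim at $\beta=\beta_n$ but not the stability claim on $(0,\beta_n)$.
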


\begin{proof}
Equation (\ref{eq.char.Gamma}) has a pair of complex conjugated solutions $ z=\pm i\om$ on the imaginary axis ($\om>0$) if and only if
\begin{equation}\label{eq.2}
(i\beta\om+1)^n=Q(i\om).
\end{equation}
Applying the modulus to both sides of equation, we obtain:
$$(\beta^2\om^2+1)^n=|Q(i\om)|^2,$$
which means that $|Q(i\om)|>1$. Based on Lemma \ref{lem.Q}, we deduce that $\om\in(0,\om_0)$ and:
\begin{equation}\label{eq.3}
\beta=\frac{1}{\om}\sqrt{|Q(i\om)|^{2/n}-1}.
\end{equation}
Replacing (\ref{eq.3}) in (\ref{eq.2}) we can also deduce
$$\left(\frac{1}{|Q(i\om)|^{1/n}}+i\sqrt{1-\frac{1}{|Q(i\om)|^{2/n}}}\right)^n=\frac{Q(i\om)}{|Q(i\om)|}.$$
Denoting $\theta(\om)=\arccos\left(\df{1}{|Q(i\om)|^{1/n}}\right)\in\left(0,\df{\pi}{2}\right)$, the above equation becomes:
$$\cos(n\theta(\om))+i\sin(n\theta(\om))=\frac{Q(i\om)}{|Q(i\om)|}.$$
Taking the real part in the above equation, we obtain:
$$\cos(n\theta(\om))=\frac{\Re(Q(i\om))}{|Q(i\om)|},$$
or equivalently
$$T_n\left(\df{1}{|Q(i\om)|^{1/n}}\right)=\frac{\Re(Q(i\om))}{|Q(i\om)|},$$
where $T_n$ denotes the Chebyshev polynomial of the first kind of order $n$. Therefore, we have obtained equation (\ref{eq.omega.Gamma}).

As inequality $(I_1)$ holds, from the proof of the first part of Theorem \ref{thm.stab}, we know that if $\beta=0$, all the roots of the characteristic equation (\ref{eq.char.Gamma}) are in the left half-plane. The number of the roots of the characteristic equation from the left half-plane can change only if a root (or pair of complex conjugated roots) crosses the imaginary axis. From (\ref{eq.3}) and Lemma \ref{lem.Q} we can easily see that the critical values of $\beta$ decrease with respect to $\omega$, and hence, the smallest critical value of $\beta$ will be the one corresponding to the largest root $\om_n$ of equation (\ref{eq.omega.Gamma}) from the interval $(0,\om_0)$. Therefore, we obtain the smallest critical value $\beta_n$ of the bifurcation parameter, given by (\ref{eq.beta.Gamma}), and we deduce that for any $\beta\in(0,\beta_n)$ the equilibrium point $E$ is asymptotically stable.

Let $z(\beta)$ denote the root of the characteristic equation (\ref{eq.char.Gamma}) satisfying $ z(\beta_n)=i\om_n$. Based on the characteristic equation (\ref{eq.char.Gamma}), we obtain:
$$(\beta z(\beta)+1)^n=Q(z(\beta)).$$
Taking the derivative with respect to $\beta$, it follows that
$$n(\beta z(\beta)+1)^{n-1}(\beta z'(\beta)+ z(\beta))= z'(\beta)Q'( z(\beta)).$$
We obtain
$$ z'(\beta)=\frac{n z(\beta)}{(\beta z(\beta)+1)\frac{Q'( z(\beta))}{Q( z(\beta))}-n\beta},$$
and hence:
$$ z'(\beta_n)=\frac{i n\om_n}{(i\beta_n\om_n+1)\frac{Q'(i\om_n)}{Q(i\om_n)}-n\beta_n}.$$
Taking the real part, we obtain:
$$\left.\frac{d\Re( z)}{d\beta}\right|_{\beta=\beta_n}
=\frac{n\om_n\Im\left((i\beta_n\om_n+1)\frac{Q'(i\om_n)}{Q(i\om_n)}\right)}{\left|(i\beta_n\om_n+1)\frac{Q'(i\om_n)}{Q(i\om_n)}-n\beta_n\right|^2}=
\frac{n\om_n\left[\beta_n\om_n\Re\left(\frac{Q'(i\om_n)}{Q(i\om_n)}\right)+\Im\left(\frac{Q'(i\om_n)}{Q(i\om_n)}\right)\right]}{\left|(i\beta_n\om_n+1)\frac{Q'(i\om_n)}{Q(i\om_n)}-n\beta_n\right|^2}.$$
A laborious computation shows that the term $\beta_n\om_n\Re\left(\frac{Q'(i\om_n)}{Q(i\om_n)}\right)+\Im\left(\frac{Q'(i\om_n)}{Q(i\om_n)}\right)$ is positive, and hence, $\left.\df{d\Re( z)}{d\beta}\right|_{\beta=\beta_n}>0$, implying that the equilibrium point $E$ is asymptotically stable if  $\beta\in(0,\beta_n)$. System (\ref{sys.hpa.dd}) undergoes a Hopf bifurcation at the equilibrium point $E$ at $\beta=\beta_n$.
\end{proof}

\subsection{A mix of Dirac and Gamma kernels}

For simplicity, we will assume that the delay kernels $h_1,h_2,h_{31},h_{32}$ are either Dirac or Gamma kernels, such that
$$H_2( z)H_{32}( z)=H_1( z) H_2( z)H_{31}( z)=\frac{e^{-\tau z}}{(\beta z+1)^n}$$
where $\tau\geq 0$, $\beta>0$ and $n\in\mathbb{Z}^+\setminus\{0\}$.

The characteristic equation (\ref{eq.char}) becomes:
\begin{equation}\label{eq.char.mix}
( z+w_1)( z+w_2)( z+w_3)+[a( z+w_1)+b]\frac{e^{-\tau z}}{(\beta z+1)^n}=0,
\end{equation}
or equivalently
$$e^{\tau z}=\frac{Q(z)}{(\beta z+1)^n}.$$
Choosing $\tau$ as bifurcation parameter, we obtain the following result:

\begin{theorem}[Hopf bifurcations for a mix of delay kernels]\label{thm.bif.mix}
Assume that inequalities $(I_1)$ and $(\overline{I_2})$ are satisfied and that $\beta\in(0,\beta_n)$, where $\beta_n$ is given by (\ref{eq.beta.Gamma}).
Let $\tilde{\om}_{n}$ denote the unique real positive root of the equation
\begin{equation}\label{eq.omega.mix}
|Q(i\om)|^2=(\beta^2\om^2+1)^n
\end{equation}
and consider
\begin{equation}\label{eq.tau.mix}
\tilde{\tau}_{np}=\frac{1}{\tilde{\om}_{n}}\left[\arccos\left(\Re\left(\frac{Q(i\tilde{\om}_{n})}{(i\beta\tilde{\om}_{n}+1)^n}\right)\right)+2p\pi \right].
\end{equation}
The equilibrium point $E$ is asymptotically stable if and only if $\tau\in[0,\tilde{\tau}_{n0})$. At $\tau=\tilde{\tau}_{np}$, system (\ref{sys.hpa.dd}) undergoes a Hopf bifurcation at the equilibrium point $E$.
\end{theorem}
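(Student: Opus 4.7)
The plan is to mimic the strategy of Theorems \ref{thm.bif.dirac} and \ref{thm.bif.gamma}: locate purely imaginary roots of the reformulated characteristic equation $e^{\tau z}(\beta z+1)^n=Q(z)$, extract from them the sequence of critical delays, obtain stability at $\tau=0$ from the Gamma-kernel result already established, and verify a transversality condition at each crossing.

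\textbf{Critical frequency and delays.} Setting $z=i\om$ with $\om>0$ and taking moduli turns the characteristic equation into (\ref{eq.omega.mix}). By Lemma \ref{lem.Q}, $\om\mapsto|Q(i\om)|$ decreases strictly from $|Q(0)|>1$ (which is precisely the content of $(\overline{I_2})$) to $0$, while $\om\mapsto(\beta^2\om^2+1)^{n/2}$ increases strictly from $1$ to $\infty$; hence (\ref{eq.omega.mix}) admits a unique positive real root $\tilde{\om}_n$. Since $|Q(i\om_0)|=1<(\beta^2\om_0^2+1)^{n/2}$, monotonicity forces $\tilde{\om}_n\in(0,\om_0)$. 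Taking the argument of $e^{i\tau\tilde{\om}_n}=Q(i\tilde{\om}_n)/(i\beta\tilde{\om}_n+1)^n$ then produces the sequence (\ref{eq.tau.mix}).

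\textbf{Stability at $\tau=0$ and root-counting.} At $\tau=0$, the characteristic equation (\ref{eq.char.mix}) reduces exactly to the Gamma-kernel equation (\ref{eq.char.Gamma}); since we assume $\beta\in(0,\beta_n)$ together with $(I_1)$ and $(\overline{I_2})$, Theorem \ref{thm.bif.gamma} places all its roots in the open left half-plane. The number of right half-plane roots can change only when $\tau$ passes through one of the values $\tilde{\tau}_{np}$, so asymptotic stability persists on $[0,\tilde{\tau}_{n0})$ provided each such crossing is strictly from left to right; conversely, if all crossings are leftward-to-rightward, the roots never return and the interval of stability is exactly $[0,\tilde{\tau}_{n0})$.

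\textbf{Transversality (the main technical step).} Denote by $z(\tau)$ the smooth root branch with $z(\tilde{\tau}_{np})=i\tilde{\om}_n$. Implicit differentiation of $e^{\tau z(\tau)}(\beta z(\tau)+1)^n=Q(z(\tau))$ yields
$$z'(\tau)=\frac{z(\tau)}{\df{Q'(z(\tau))}{Q(z(\tau))}-\df{n\beta}{\beta z(\tau)+1}-\tau}.$$
Calling the denominator $D$, one finds $\Re(z'(\tilde{\tau}_{np}))=\tilde{\om}_n\,\Im(D)/|D|^2$, so the sign of $d\Re(z)/d\tau$ at $\tilde{\tau}_{np}$ is the sign of
$$\Im(D)=\Im\!\left(\df{Q'(i\tilde{\om}_n)}{Q(i\tilde{\om}_n)}\right)-\Im\!\left(\df{n\beta}{i\beta\tilde{\om}_n+1}\right).$$
The first term is strictly positive by Lemma \ref{lem.Q}, while the elementary identity
$$\Im\!\left(\frac{n\beta}{i\beta\tilde{\om}_n+1}\right)=-\frac{n\beta^2\tilde{\om}_n}{1+\beta^2\tilde{\om}_n^2}<0$$
shows the second contributes with the opposite sign. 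Hence $\Im(D)>0$ and $d\Re(z)/d\tau\big|_{\tau=\tilde{\tau}_{np}}>0$ at every $\tilde{\tau}_{np}$. This simultaneously delivers the Hopf non-degeneracy condition and, because every crossing is leftward-to-rightward, the ``if and only if'' characterization on $[0,\tilde{\tau}_{n0})$.

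The genuine difficulty is confined to the transversality step, and is more bookkeeping than substance: one must cleanly decompose the derivative of the mixed Dirac--Gamma side of the characteristic equation into the two terms appearing in $\Im(D)$. Once this is done, the two positivity facts (Lemma \ref{lem.Q} for the $Q$-factor and the direct computation for the Gamma factor) align to give a strict inequality without any case analysis on $p$ or on $\beta\in(0,\beta_n)$, and the proof mirrors the structure of Theorems \ref{thm.bif.dirac} and \ref{thm.bif.gamma}.
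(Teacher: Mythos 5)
Your proposal is correct and follows essentially the same route as the paper: unique $\tilde{\om}_n\in(0,\om_0)$ from the monotonicity of $|Q(i\om)|$ versus $(\beta^2\om^2+1)^{n/2}$, stability at $\tau=0$ imported from the Gamma-kernel theorem, and transversality via implicit differentiation of $e^{\tau z}(\beta z+1)^n=Q(z)$, with the sign of $\Re(z'(\tilde{\tau}_{np}))$ reduced to $\Im\bigl(Q'(i\tilde{\om}_n)/Q(i\tilde{\om}_n)\bigr)+n\beta^2\tilde{\om}_n/(1+\beta^2\tilde{\om}_n^2)>0$, exactly the quantity the paper exhibits. Your explicit decomposition of the denominator and the sign computation for the Gamma factor is precisely what the paper's displayed formula encodes, so there is no substantive difference.
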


\begin{proof}
Equation (\ref{eq.char.mix}) has a pair of complex conjugated solutions $ z=\pm i\om$ on the imaginary axis ($\om>0$) if and only if
\begin{equation}\label{eq.4}
e^{i\tau\om }=\frac{Q(i\om)}{(i\beta \om+1)^n}.
\end{equation}
Applying the modulus to both sides of equation, we obtain:
$$|Q(i\om)|^2=(\beta^2\om^2+1)^n.$$
Based on Lemma \ref{lem.Q}, the left hand-side of this equation is strictly decreasing, while the right hand-side is strictly increasing on $(0,\infty)$, implying that there is a unique solution $\tilde{\om}_n\in(0,\om_0)$.

Taking the real parts of both sides of equation (\ref{eq.4}), we deduce the critical values of the bifurcation parameter $\tau$, given by (\ref{eq.tau.mix}).

For $\tau=0$, taking into account that $\beta\in(0,\beta_n)$, from Theorem \ref{thm.bif.gamma} we know that the equilibrium point $E$ is asymptotically stable. The number of the roots of the characteristic equation from the left half-plane can change only if a root (or pair of complex conjugated roots) crosses the imaginary axis, i.e. at the critical values $\tilde{\tau}_{np}$.

Let $z_{np}(\tau)$ denote the root of the characteristic equation (\ref{eq.char.mix}) satisfying $ z_{np}(\tilde{\tau}_{np})=i\tilde{\om}_n$. Based on the characteristic equation (\ref{eq.char.mix}), we obtain:
$$(\beta z_{np}(\tau)+1)^n e^{\tau z_{np}(\tau)}=Q(z_{np}(\tau)).$$
Taking the derivative with respect to $\tau$, it follows that
\begin{align*}
&n(\beta z_{np}(\tau)+1)^{n-1}\beta z_{np}'(\tau)e^{\tau z_{np}(\tau)}+(\beta z_{np}(\tau)+1)^n (\tau z_{np}'(\tau)+z_{np}(\tau))e^{\tau z_{np}(\tau)}\\
&= z_{np}'(\tau)Q'( z_{np}(\tau)).
\end{align*}
We obtain
$$ z_{np}(\tau)=\frac{z_{np}(\tau)}{\frac{Q'( z_{np}(\tau))}{Q( z_{np}(\tau))}-\tau-\frac{n\beta}{\beta z_{np}(\tau)+1}},$$
and hence:
$$ z_{np}'(\tilde{\tau}_{np})=\frac{i \tilde{\om}_n}{\frac{Q'( i \tilde{\om}_n)}{Q( i \tilde{\om}_n)}-\tau-\frac{n\beta}{i\beta \tilde{\om}_n+1}}.$$
Taking the real part, we obtain:
$$\left.\frac{d\Re( z_{np})}{d\tau}\right|_{\tau=\tilde{\tau}_{np}}
=\frac{\tilde{\om}_n\left[\Im\left(\frac{Q'( i \tilde{\om}_n)}{Q( i \tilde{\om}_n)}\right)+\frac{n\beta^2\tilde{\om}_n}{\beta^2\tilde{\om}_n^2+1}\right]}{\left|\frac{Q'( i \tilde{\om}_n)}{Q( i \tilde{\om}_n)}-\tau-\frac{n\beta}{i\beta \tilde{\om}_n+1}\right|^2}>0.$$
The positivity follows from Lemma \ref{lem.Q}. Thus, the equilibrium point $E$ can only be asymptotically stable if and only if $\tau\in[0,\tilde{\tau}_{n0})$ and for any $p\in\mathbb{Z}^+$, at $\tau=\tilde{\tau}_{np}$, system (\ref{sys.hpa.dd}) undergoes a Hopf bifurcation at the equilibrium point $E$.
\end{proof}

\section{A fractional-order model of the HPA axis}

In this section, the following fractional-order mathematical model of the HPA axis will be briefly discussed:
\begin{equation}\label{sys.hpa.frac}
\left\{\begin{array}{l}
^cD^{q} x_1(t)=f_1(x_3(t-\tau_{31}))-w_1x_1(t)\\
^cD^{q} x_2(t)=f_2(x_3(t-\tau_{32}))x_1(t-\tau_1)-w_2x_2(t)\\
^cD^{q} x_3(t)=k_3x_2(t-\tau_2)-w_3x_3(t)
\end{array}\right.
\end{equation}
where $q\in(0,1)$ and $\tau_1,\tau_2,\tau_{31},\tau_{32}\geq 0$ represent discrete time delays. The classical integer order derivative is generalized by the fractional-order Caputo derivative, defined below \cite{Kilbas,Lak,Podlubny}.

\begin{definition}
For a continuous function $x(t)$, with $x'\in L^1_{loc}(\mathbb{R}^+)$, the Caputo fractional-order derivative of order $q\in(0,1)$ of $f$ is defined by
$$
^cD^q x(t)=\frac{1}{\Gamma(1-q)}\int_0^t(t-s)^{-q}x'(s)ds.
$$
\end{definition}

It is obvious that the fractional-order system (\ref{sys.hpa.frac}) and the integer-order system (\ref{sys.hpa.dd})  have the same equilibrium state $E$. In order to study the stability of the equilibrium state $E$ in the framework of system (\ref{sys.hpa.frac}) without delays, we rely on the linearization theorem recently proved in \cite{Li_Ma_2013}. This linearization theorem is an analogue of the classical Hartman theorem for nonlinear integer-order dynamical systems. Moreover, for the corresponding linearized fractional-order system, the following stability result holds \cite{Matignon}:

\begin{theorem}\label{thm.matignon}
The linear fractional-order autonomous system
$$^cD^q \bold{x}=A \bold{x}\qquad\textrm{where}~~ A\in\mathbb{R}^{n\times n}$$
where $q\in(0,1)$ is asymptotically stable if and only if
\begin{equation}
\label{eq.lambda.spec}
|\arg(\lambda)|>\frac{q\pi}{2}\qquad\forall \lambda\in\sigma(A),
\end{equation}
where $\sigma(A)$ denotes the spectrum of the matrix $A$ (i.e. the set of all eigenvalues).
\end{theorem}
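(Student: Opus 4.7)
The plan is to establish Matignon's stability criterion via an explicit solution formula combined with the sharp asymptotics of the Mittag-Leffler function. First, I would apply the Laplace transform to the system, using the identity $\mathcal{L}\{^cD^q \mathbf{x}\}(s)=s^q\mathbf{X}(s)-s^{q-1}\mathbf{x}(0)$ which follows directly from the Caputo definition. This yields $(s^q I - A)\mathbf{X}(s)=s^{q-1}\mathbf{x}(0)$, and termwise inversion of $(s^q I - A)^{-1}s^{q-1}=\sum_{k\ge 0} A^k s^{-qk-1}$ produces the closed-form solution $\mathbf{x}(t)=E_q(At^q)\mathbf{x}(0)$, where $E_q(z)=\sum_{k=0}^{\infty} \df{z^k}{\Gamma(qk+1)}$ is the one-parameter Mittag-Leffler function extended to a matrix function by the analytic functional calculus.

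Next, I would reduce to the scalar case through a Jordan decomposition. Writing $A=PJP^{-1}$, one has $E_q(At^q)=P\,E_q(Jt^q)\,P^{-1}$, so asymptotic stability of the origin is equivalent to the decay of $E_q(Jt^q)$ as $t\to\infty$. For a Jordan block of size $m$ associated to an eigenvalue $\la$, the corresponding diagonal block of $E_q(Jt^q)$ is upper triangular with entries proportional to $t^{jq}E_q^{(j)}(\la t^q)$ for $j=0,1,\ldots,m-1$. Hence the theorem reduces to showing that $t^{jq}E_q^{(j)}(\la t^q)\to 0$ as $t\to\infty$ for every eigenvalue $\la$ and every $j\ge 0$, precisely when $|\arg(\la)|>q\pi/2$.

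The core of the argument is the classical Wiman asymptotic expansion: as $|z|\to\infty$, one has $E_q(z)=\df{1}{q}\exp(z^{1/q})+O(|z|^{-1})$ uniformly in the sector $|\arg(z)|\le q\pi/2$, and $E_q(z)=-\sum_{k=1}^{N}\df{z^{-k}}{\Gamma(1-qk)}+O(|z|^{-N-1})$ uniformly in the sector $|\arg(z)|\ge q\pi/2$ (away from the critical rays). Specialising to $z=\la t^q$ shows that $|E_q(\la t^q)|$ decays polynomially like $t^{-q}$ when $|\arg(\la)|>q\pi/2$, whereas it contains an exponentially growing factor $\exp(\Re(\la^{1/q})t)$ with $\Re(\la^{1/q})>0$ when $|\arg(\la)|<q\pi/2$. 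Differentiating the expansions termwise shows that $E_q^{(j)}(\la t^q)$ decays like $t^{-q(j+1)}$ in the stable sector, so the prefactor $t^{jq}$ is absorbed and the product still tends to zero, covering non-trivial Jordan blocks.

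The main obstacle is the borderline case $|\arg(\la)|=q\pi/2$, where the Wiman expansions do not apply uniformly and one must show that solutions remain bounded but do not decay to zero, so that asymptotic stability fails; this is typically handled by a Bromwich contour deformation in the inverse Laplace transform, picking up a residue contribution at $s=\la^{1/q}$ which lies on the imaginary axis. A secondary technical point is justifying both the termwise inversion of the Laplace transform and the convergence of the matrix Mittag-Leffler series; the latter follows from the fact that $E_q$ is entire of order $1/q$, so $E_q(At^q)$ is a well-defined matrix function for all $t\ge 0$ and the solution representation is valid globally.
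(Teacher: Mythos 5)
The paper does not prove Theorem \ref{thm.matignon} at all: it is imported verbatim from the literature with the citation \cite{Matignon}, so there is no in-paper argument to compare yours against. Your proposal is, in substance, the classical proof (essentially Matignon's original one): Laplace transform of the Caputo derivative, the representation $\mathbf{x}(t)=E_q(At^q)\mathbf{x}(0)$, Jordan reduction producing entries $\frac{t^{jq}}{j!}E_q^{(j)}(\la t^q)$, and the Wiman asymptotics of the Mittag--Leffler function to decide decay versus exponential growth according to the sign of $\Re(\la^{1/q})$. The logic is sound, including the treatment of nontrivial Jordan blocks and the observation that on the critical rays $|\arg(\la)|=q\pi/2$ the contribution $\exp(\la^{1/q}t)$ is purely oscillatory, so asymptotic stability fails there as well.

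One small imprecision worth fixing if you write this out in full: the two Wiman expansions are not split exactly at $|\arg(z)|=q\pi/2$. The expansion containing the exponential term is valid uniformly for $|\arg(z)|\le\mu$ with any $\mu$ satisfying $q\pi/2<\mu<\min\{\pi,q\pi\}$, and the purely algebraic expansion for $\mu\le|\arg(z)|\le\pi$; near the critical rays one must use the combined formula
$$E_q(z)=\frac{1}{q}\exp\left(z^{1/q}\right)-\sum_{k=1}^{N}\frac{z^{-k}}{\Gamma(1-qk)}+O\left(|z|^{-N-1}\right),$$
in which the exponential factor is itself exponentially small whenever $q\pi/2<|\arg(z)|\le\mu$, so the algebraic terms still dominate and the conclusion $E_q(\la t^q)=O(t^{-q})$ survives. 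With that adjustment (and the routine justification of termwise differentiation of the asymptotic series via Cauchy estimates in a subsector), your argument is complete; it is more self-contained than the paper, which simply takes the result as given.
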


The above theorem shows that in the case of linear fractional-order systems, the necessary and sufficient conditions for the asymptotic stability of the equilibrium state are weaker than the corresponding conditions from the classical integer-order case. Therefore, taking into account Theorem \ref{thm.stab}, we can easily obtain the following result:

\begin{proposition}
In the non-delayed case ($\tau_1=\tau_2=\tau_{31}=\tau_{32}=0$), if inequality $(I_1)$ is satisfied, the equilibrium state $E$ of the fractional order system (\ref{sys.hpa.frac}) is locally asymptotically stable.
\end{proposition}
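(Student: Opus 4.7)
The plan is to reduce the problem to the linear fractional-order system via the Li--Ma linearization theorem and then apply Matignon's criterion (Theorem \ref{thm.matignon}), exploiting the fact that the fractional-order spectral sector $|\arg(\lambda)|>q\pi/2$ is strictly larger than the classical left half-plane for any $q\in(0,1)$. Concretely, I would first observe that in the non-delayed case the Jacobian of (\ref{sys.hpa.frac}) at $E$ is identical to the Jacobian of (\ref{sys.hpa.dd}) at $E$ in the non-delayed case, since the Caputo derivative and the classical derivative share the same linearization (only the dynamics of the linear system change).

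Next, I would invoke the linearization theorem of \cite{Li_Ma_2013}, which reduces the local asymptotic stability of $E$ for (\ref{sys.hpa.frac}) to verifying the spectral condition (\ref{eq.lambda.spec}) for the Jacobian at $E$. The eigenvalues of this Jacobian are precisely the roots of the non-delayed characteristic polynomial $\lambda^3+c_1\lambda^2+c_2\lambda+c_3=0$ studied in the proof of Theorem \ref{thm.stab}, part 1. Under hypothesis $(I_1)$, that proof establishes via the Routh--Hurwitz test that all three roots satisfy $\Re(\lambda)<0$, i.e.\ $|\arg(\lambda)|>\pi/2$.

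Since $q\in(0,1)$ implies $q\pi/2<\pi/2$, every eigenvalue automatically satisfies $|\arg(\lambda)|>\pi/2>q\pi/2$, so condition (\ref{eq.lambda.spec}) of Theorem \ref{thm.matignon} is met. Consequently the linearized fractional-order system is asymptotically stable, and by the Li--Ma linearization theorem the equilibrium $E$ of (\ref{sys.hpa.frac}) is locally asymptotically stable.

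There is really no serious obstacle here: the proof is a one-line corollary of Theorem \ref{thm.stab} part 1 combined with the observation that the fractional stability sector strictly contains the integer-order stability sector. The only point that requires minor care is checking that the Jacobian of the nonlinear system at $E$ coincides in both the integer- and fractional-order formulations (which is immediate from the definition of the Caputo derivative acting on the linear part) so that the Routh--Hurwitz computation already carried out can be reused verbatim.
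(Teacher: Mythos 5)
Your proposal is correct and follows exactly the paper's (implicitly stated) argument: invoke the Li--Ma linearization theorem, note that the Jacobian at $E$ and hence the cubic characteristic polynomial coincide with those of the non-delayed integer-order case, use the Routh--Hurwitz computation from the proof of Theorem \ref{thm.stab} part 1 under $(I_1)$ to place all eigenvalues in the open left half-plane, and conclude via Matignon's sector condition since $|\arg(\lambda)|>\pi/2>q\pi/2$ for $q\in(0,1)$. Nothing is missing.
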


At this point, the general stability and bifurcation theory for nonlinear fractional-order systems with discrete delays is still an open problem, and an active area of research. Because of the lack of theoretical tools, we have to rely on numerical simulations to exemplify the existence of oscillatory solutions of system (\ref{sys.hpa.frac}), which will be presented in the next section.

\section{Numerical results and discussions}

\subsection{Parameter values}

For the numerical simulations, the values of the elimination constants $w_i$, $i\in\{1,2,3\}$ are computed according to the formula $w_i=\df{\ln(2)}{T_i}$, where $T_i$ represent the plasma half-life of hormones, and are given by: $T_1\approx 4$ min, $T_2\approx 19.9$ min, $T_3\approx 76.4$ min \cite{Carroll_2007,Vinther_2011}.

The equilibrium point $E$ of the system should be at the mean values of the hormones: $\bar{x}_1=7.659$ pg/ml (24-h mean value of CRH), $\bar{x}_2=21$ pg/ml (24-h mean value of ACTH) and $\bar{x}_3=3.055$ ng/ml (24-h mean value of free CORT) \cite{Carroll_2007}. Based on equation (\ref{eq.state}), this leads us to the following relationship:
$$\left(\df{f_1(x^\star)}{w_1}, \df{w_3x^\star}{k_3},x^\star\right)=(\bar{x}_1,\bar{x}_2,\bar{x}_3)$$
and hence:
\begin{align*}
x^\star&=\bar{x}_3=3.055\textrm{ ng/ml};\\
k_3&=w_3\frac{x^\star}{\bar{x}_2}= \frac{\ln(2)}{76.4\textrm{ min}}\cdot\frac{3.055\textrm{ ng/ml}}{21 \textrm{pg/ml}}=1.31985\textrm{ min}^{-1};\\
f_1(x^\star)&=w_1 \bar{x}_1= \frac{\ln(2)\cdot 7.659 \textrm{ pg/ml}}{4\textrm{ min}}=1.3272 \frac{\textrm{pg}}{\textrm{ml}\cdot\textrm{min}}.
\end{align*}
Moreover, due to the fact that $x^\star$ is the fixed point of the function $\df{k_3}{w_1w_2w_3}f_1(x)f_2(x)$ (see (\ref{sys.eq.points})), it follows that
$$f_2(x^\star)=\frac{w_1w_2w_3}{k_3}\cdot\frac{x^\star}{f_1(x^\star)}=0.0955 \textrm{ min}^{-1}.$$

For the numerical simulations, the feedback functions $f_1$ and $f_2$ are considered as in equation (\ref{func.hill}). For fixed values of the parameters $c$ (given in ng/ml, as $\bar{x}_3$) $\alpha\in[1,7]$, $\eta,\mu\in(0,1]$ (dimensionless), the parameters $k_1$ and $k_2$ are uniquely determined, based on the numerical values of $x^\star$, $f_1(x^\star)$ and $f_2(x^\star)$ determined above. Hence:
\begin{align*}
k_1&=\frac{1.3272}{1-\eta\frac{(3.055)^\alpha}{c^\alpha+(3.055)^\alpha}}\, \frac{\textrm{pg}}{\textrm{ml}\cdot\textrm{min}},\\
k_2&=\frac{0.0955}{1-\mu\frac{(3.055)^\alpha}{c^\alpha+(3.055)^\alpha}}\, \textrm{ min}^{-1}.
\end{align*}
In the following, we will assume for simplicity that $\eta=\mu$.


As for the average time delays, we first observe that the time required by CRH to travel from the hypothalamus to the pituitary through the hypophyseal portal blood vessels is extremely short \cite{Bairagi_2008} and therefore, for simplicity, we will assume a mean time delay $\tau_1=0$.

The human inhibitory time course concerning the negative feedback of cortisol on the production of ACTH shows great variability and has been described in the past as anything between 15 and 60 min \cite{Boscaro_1998,Posener_1997}. However, more recently, it has been shown that humans show fast HPA negative feedback \cite{Russell_2010}, suggesting that both GR (glucocorticoid receptors) and MR (mineralocorticoid receptors) are involved in this mechanism, with GR effecting a rapid nongenomic feedback at the level of the anterior pituitary and MR sensing higher glucocorticoid levels while levels are still rising \cite{Karst_2005}. Hence, we consider a mean delay $\tau_{32}\in(0,60]$.

In \cite{Hermus_1984}, a 30-min delay has been reported in the positive-feedforward effect of ACTH on plasma cortisol level, leading to the assumption that $\tau_2\in(0,30]$.

\subsection{Dirac kernels}

Based on the previous observations and (\ref{cond.dirac}), we choose the following discrete time delays:
\begin{itemize}
\item average time delay accounting for the positive feedback of the hypothalamus on the pituitary: $\tau_1=0$;
\item average time delay due to the positive feedback of the pituitary on the adrenal glands: $\tau_2\leq 30$ (min);
\item average time delay due to the negative feedback effect of the adrenal glands on the hypothalamus and pituitary, respectively: $\tau_{31}=\tau_{32}\leq 60$ (min).
\end{itemize}
 Our aim is to observe periodic solutions for sufficiently large values of the bifurcation parameter $\tau=\tau_2+\tau_{32}\leq 90$ (min), depending on the choice of the parameters $\alpha$, $\eta=\mu$ and $c$. Inequality $(\overline{I_2})$ has to be fulfilled, because it is a necessary condition for the occurrence of bifurcations. Based on Theorem \ref{thm.bif.dirac} and eq. (\ref{eq.tau.Dirac}), we can numerically determine the critical value $\tau_0$ of the bifurcation parameter corresponding to the occurrence of a Hopf bifurcation and we are looking for critical values which are smaller that $90$ (min). In Fig. \ref{fig:1}, for different values of $\alpha\in[2,7]$, we have represented the regions in the parameter plane $(c,\eta=\mu)$ for which inequality $(\overline{I_2})$ is satisfied, and the computed critical value $\tau_0$ is within $90$ min, $60$ min, $30$ min and $15$ min respectively.

 We observe that for $\alpha\geq 5$, for suitable choices of the parameters $c$ and $\eta=\mu$, the critical value $\tau_0$ is smaller than $15$ minutes. This means that periodic solutions can be obtained in system (\ref{sys.hpa.dd}) with discrete delays satisfying $\tau_0<\tau_2+\tau_{31}=\tau_2+\tau_{32}<15$ (min), which is in accordance with the fast feedback observed in humans \cite{Russell_2010}. For example, when $\alpha=6$, $\mu=\eta=1$ and $c=2$ (ng/ml), we compute $k_1=18.18$ (pg/(ml$\cdot$min)), $k_2=1.3$ (min$^{-1}$) and the critical value $\tau_0=11.4732$ (min). Therefore, periodic solutions can be observed for $\tau_2+\tau_{31}=\tau_2+\tau_{32}>11.4372$ (min) (see Fig. \ref{fig:dirac2}).

 However, if $\alpha<5$, we note that for any combination of parameters $c$ and $\eta=\mu$ we obtain $\tau_0>15$ (min). For example, when
$\alpha=3$, $\mu=\eta=0.95$ and $c=2$ (ng/ml), we compute $k_1=5.14$ (pg/(ml$\cdot$min)), $k_2=0.36$ (min$^{-1}$) and the critical value $\tau_0=46.5028$ (min). Hence, periodic solutions can only be observed for larger discrete delays (see Fig. \ref{fig:dirac1}).

Numerical simulations show that when the bifurcation parameter $\tau$ passes through the critical value $\tau_0$, oscillations appear due to Hopf bifurcation phenomena. We notice that oscillations corresponding to the case of smaller critical values (fast feedback) have higher amplitudes and higher frequency (over a 24 hour range) than those corresponding to the case of larger critical values (slow feedback).

In \cite{Vinther_2011}, where the minimal model of the HPA axis has been considered with discrete time delays, it has been reported that individual time delays need to exceed 19 min in order to observe oscillating solutions. Our numerical simulations show that, for a suitable choice of parameters, it is possible to obtain oscillations for time delays much smaller than 19 minutes, corresponding to the case of fast feedback noticed in \cite{Russell_2010}.

\begin{figure}[htbp]
\centering
\begin{tabular}{ccc}
\includegraphics[width=0.47\textwidth]{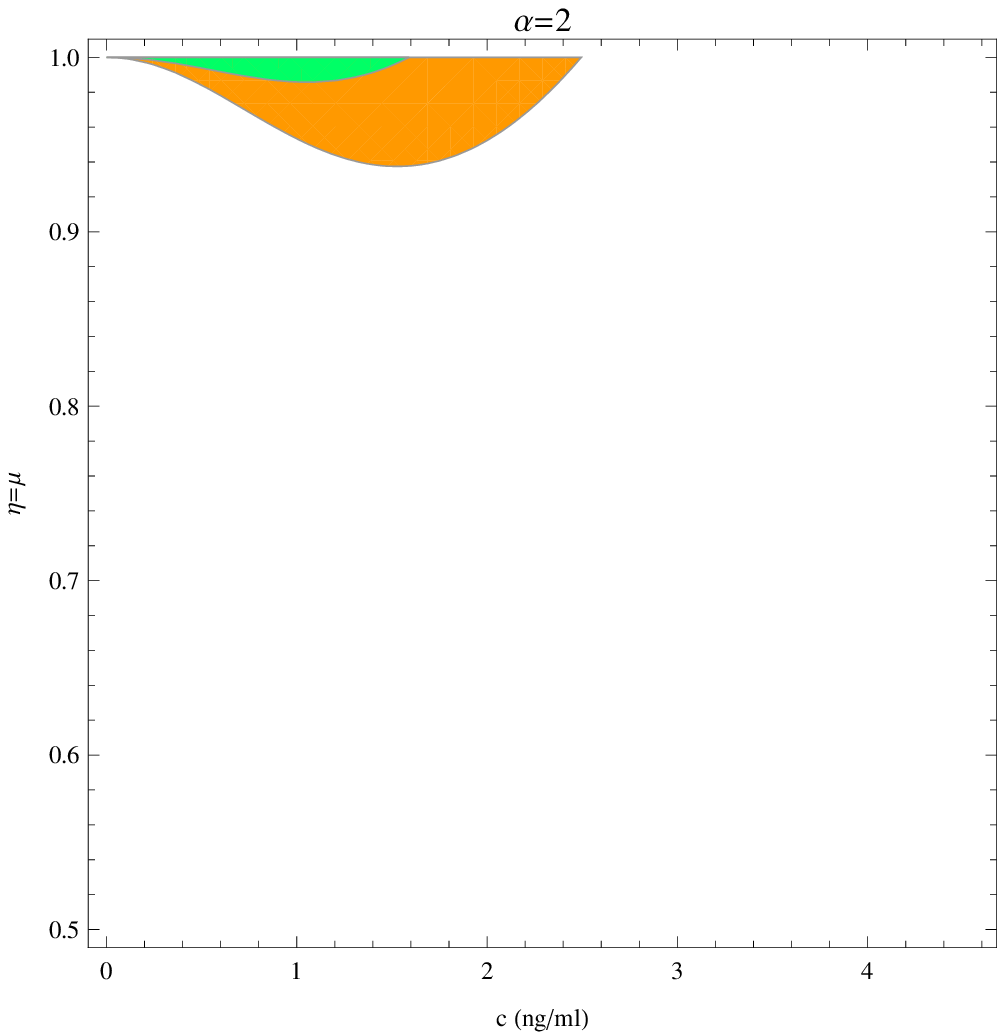} &
\includegraphics[width=0.47\textwidth]{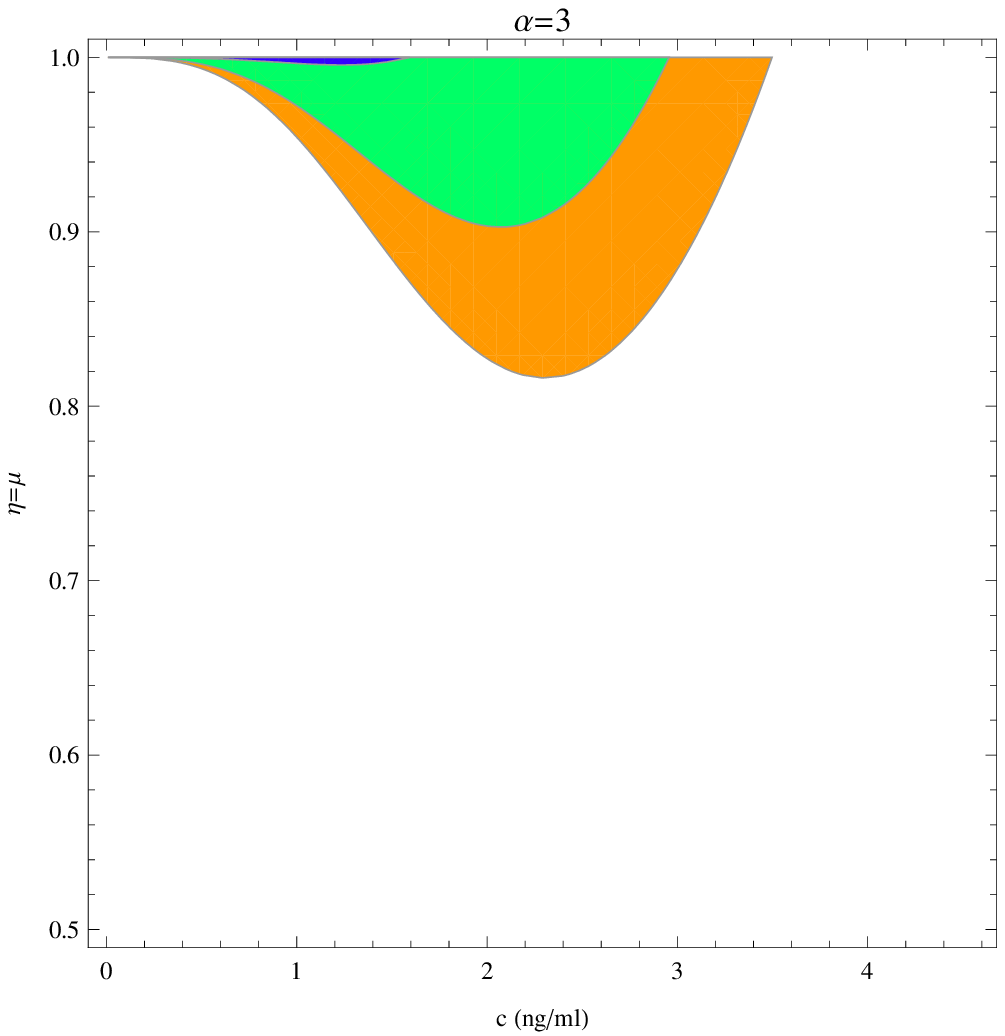} \\
\includegraphics[width=0.47\textwidth]{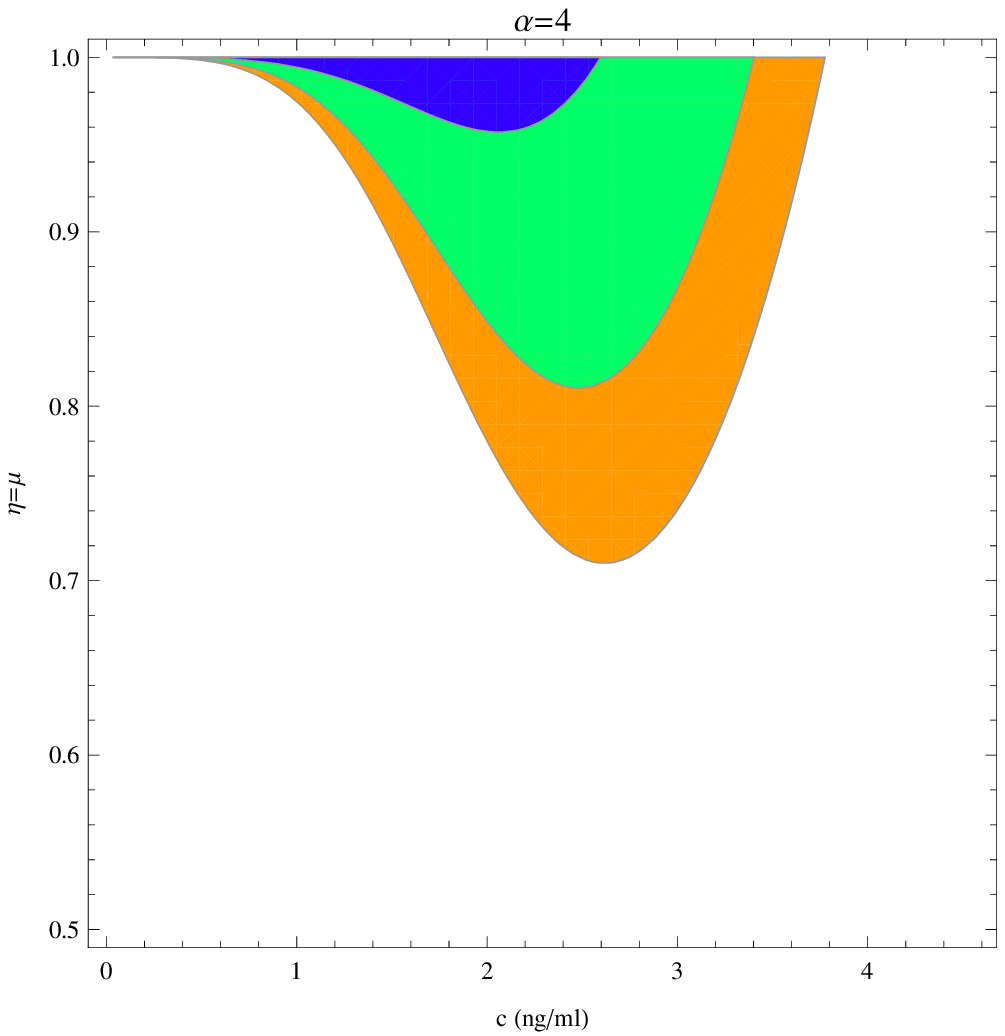} &
\includegraphics[width=0.47\textwidth]{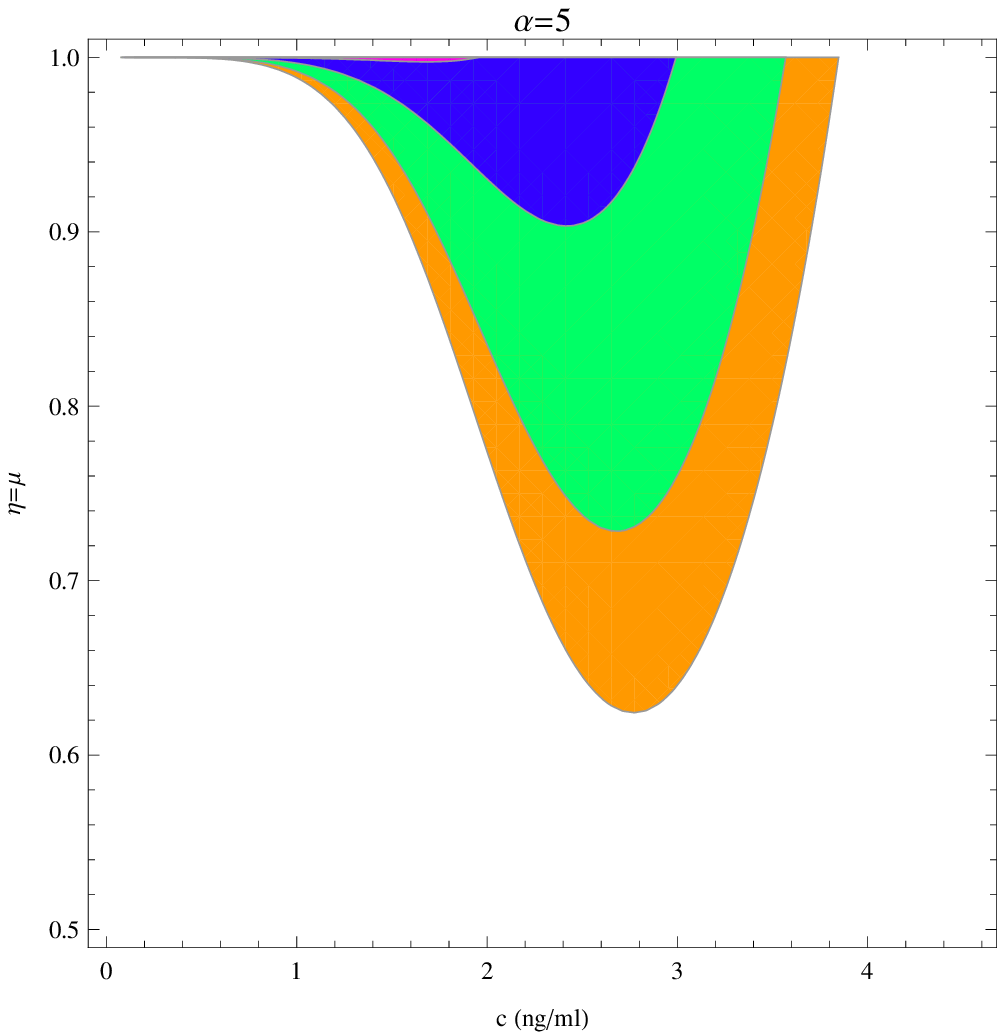} \\
\includegraphics[width=0.47\textwidth]{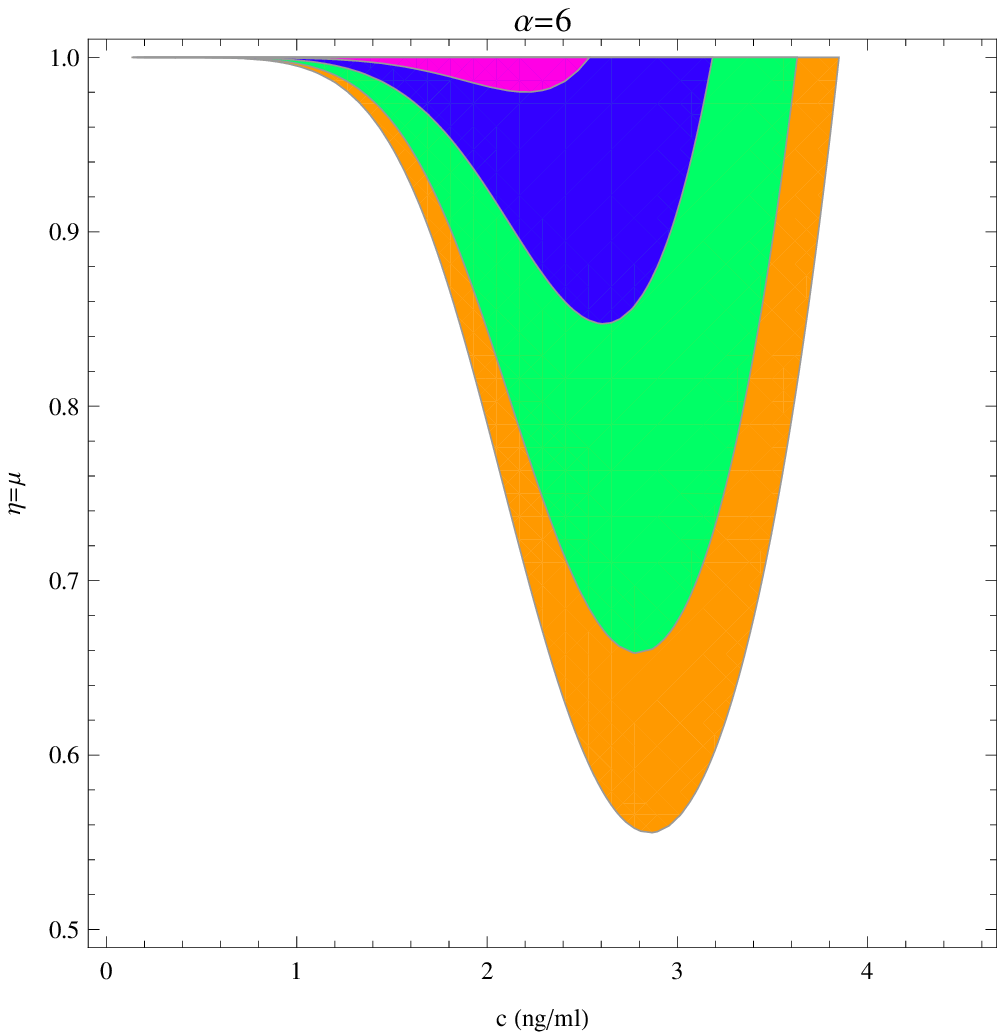} &
\includegraphics[width=0.47\textwidth]{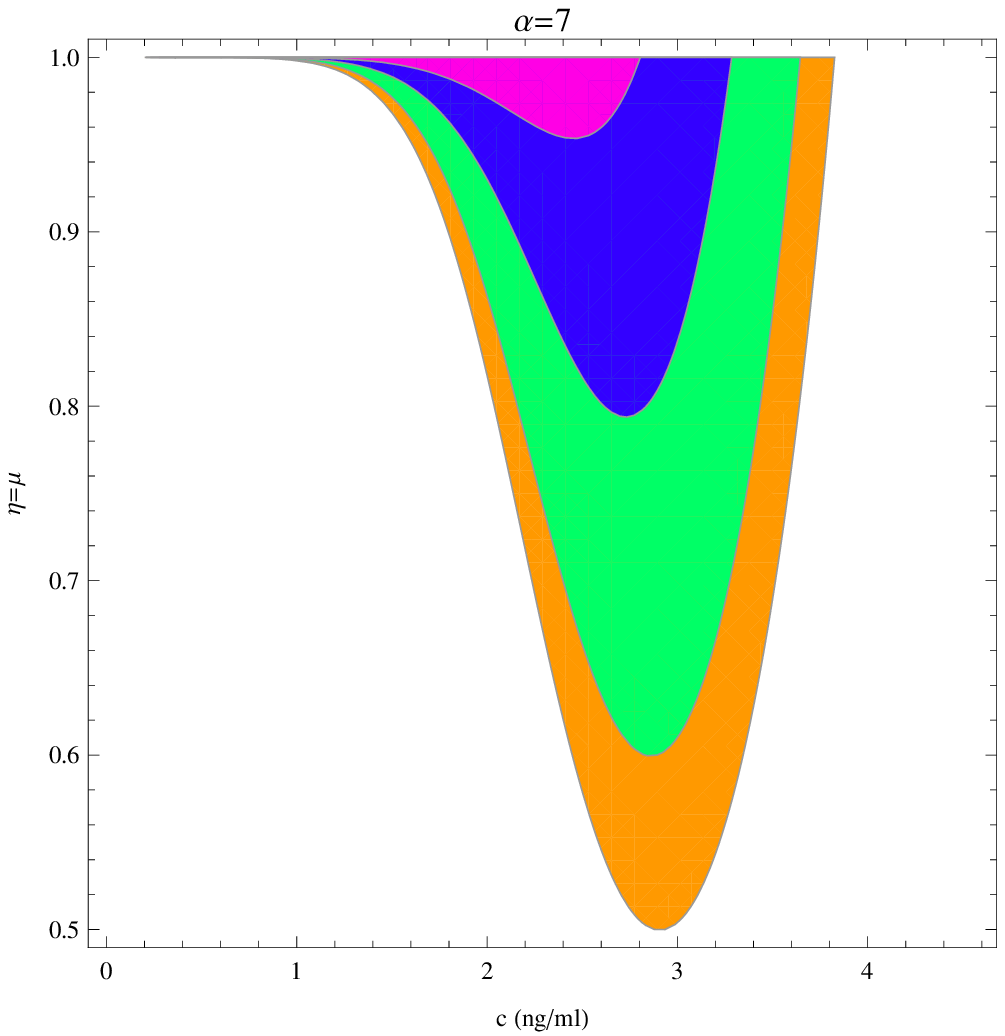}
\end{tabular}
\caption{Regions in the parameter plane $(c,\eta=\mu)$ resulting in critical values $\tau_0\in(60,90]$ (orange), $\tau_0\in(30, 60]$ (green), $\tau_0\in(15,30]$ (blue) and $\tau_0\in(0,15]$ (pink) respectively, while inequality $(\overline{I_2})$ is fulfilled, for different values of $\alpha$. (case of Dirac kernels)}
\label{fig:1}
\end{figure}

\begin{figure}
\centering
\begin{tabular}{ccc}
\includegraphics[width=0.47\textwidth]{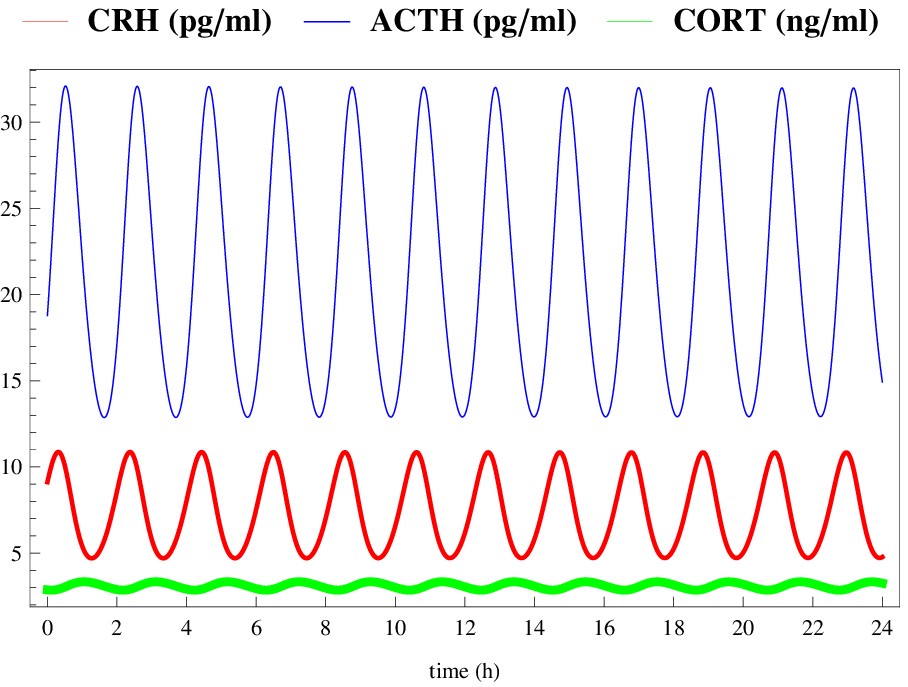} &
\includegraphics[width=0.47\textwidth]{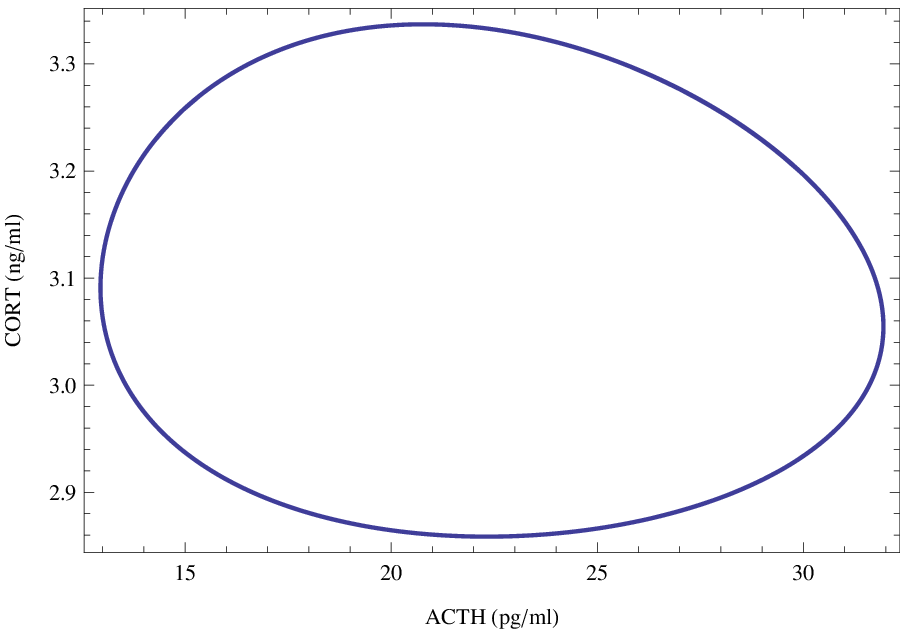} \\
\end{tabular}
\caption{Stable periodic orbit of system (\ref{sys.hpa.dd}) with Dirac kernels (discrete delays $\tau_1=0$, $\tau_2=5$ (min), $\tau_{31}=\tau_{32}=7$ (min)) due to  the Hopf bifurcation taking place when the bifurcation parameter $\tau$ exceeds the critical value $\tau_0=11.4732$ (min), in the case of feedback functions $f_1(u)=18.18\left(1-\frac{u^{6}}{2000^{6}+u^{6}}\right)$, $f_2(u)=1.3\left(1-\frac{u^{6}}{2000^{6}+u^{6}}\right)$.}
\label{fig:dirac2}
\end{figure}

\begin{figure}[htbp]
\centering
\begin{tabular}{ccc}
\includegraphics[width=0.47\textwidth]{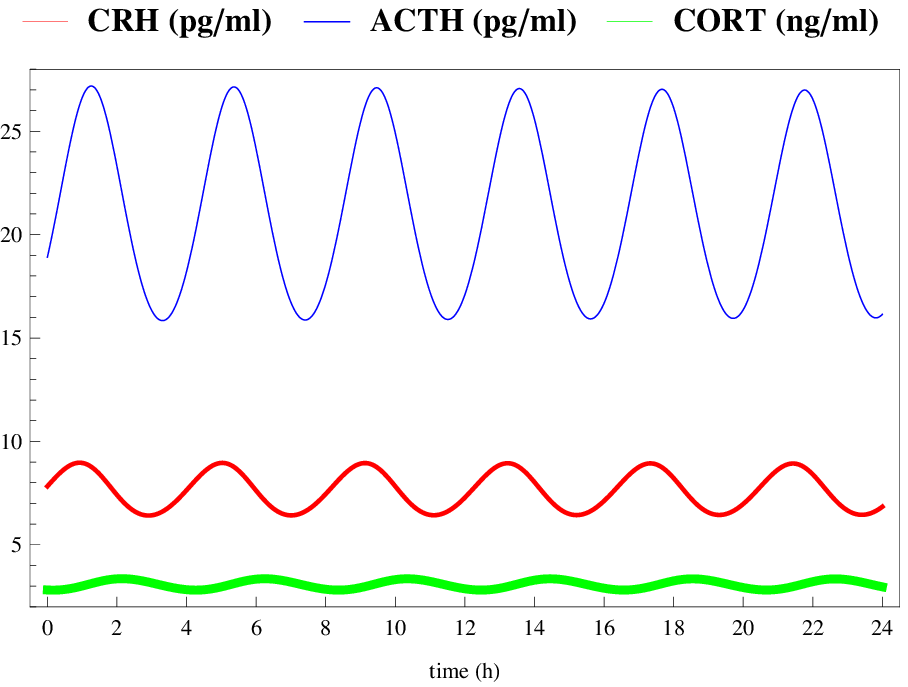} &
\includegraphics[width=0.47\textwidth]{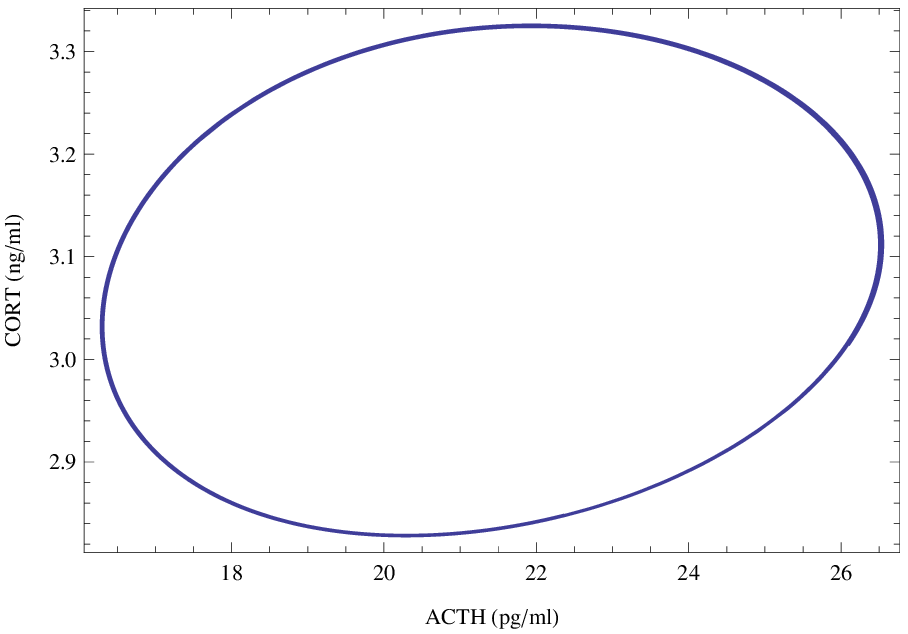} \\
\end{tabular}
\caption{Stable periodic orbit of system (\ref{sys.hpa.dd}) with Dirac kernels (discrete delays $\tau_1=0$, $\tau_2=27$ (min), $\tau_{31}=\tau_{32}=40$ (min)) due to  the Hopf bifurcation taking place when the bifurcation parameter $\tau$ exceeds the critical value $\tau_0=46.5028$ (min) in the case of feedback functions $f_1(u)=5.14\left(1-\frac{u^{3}}{2000^{3}+u^{3}}\right)$, $f_2(u)=0.36\left(1-\frac{u^{3}}{2000^{3}+u^{3}}\right)$.}
\label{fig:dirac1}
\end{figure}


\subsection{Gamma kernels}

For numerical simulations in the case of gamma kernels, assuming that there is no time delay accounting for the positive feedback of the hypothalamus on the pituitary, we consider $h_1(t)=\delta(t)$. Moreover, the other kernels are chosen to be strong gamma kernels (i.e. $n_2=n_{31}=n_{32}=2$):
$$h_2(t)=h_{31}(t)=h_{32}(t)=\df{t e^{-t/\beta}}{\beta^{2}}.$$
Strong kernels represent the case when the maximum influence on the growth rates of CRH, ACTH and CORT concentrations at any time $t$ is due to hormone concentrations at the previous time $t-\tau$, where $\tau=2\beta$ is the average time-delay. On the other hand, weak kernels (with  $n_2=n_{31}=n_{32}=1$) would indicate that the maximum weighted response of the growth rates is affected by the current hormone concentration level, while past concentrations have exponentially decreasing influence, which is less plausible than the case of strong kernels, from biological point of view.

In this case, $n=n_2+n_{32}=n_1+n_2+n_{31}=4$, and hence, the total average time-delay of the system is $\tau=4\beta$ (see eq. (\ref{eq.tau})).

Based on Theorem \ref{thm.bif.gamma} and eq. (\ref{eq.beta.Gamma}), we can numerically determine the critical value $\beta_4$  of the bifurcation parameter corresponding to the occurrence of a Hopf bifurcation. Therefore, we  can compute the critical value of the total average time-delay $\tau_g=4\beta_4$. Our aim is to find critical values satisfying $\tau_g\leq 90$ (min). In Fig. \ref{fig:gamma}, for different values of $\alpha\in[2,7]$, we have represented the regions in the parameter plane $(c,\eta=\mu)$ for which inequality $(\overline{I_2})$ is satisfied, and the computed critical value $\tau_g$ is within $90$ min, $60$ min, $30$ min and $15$ min respectively.

We observe that for $\alpha\geq 6$, for suitable choices of the parameters $c$ and $\eta=\mu$, the critical value $\tau_g$ is smaller than $15$ minutes. This means that periodic solutions can be obtained in system (\ref{sys.hpa.dd}) with gamma delay kernels satisfying $\tau=4\beta<15$ (min), which is in accordance with the fast feedback observed in humans \cite{Russell_2010}. When $\alpha=6$, $\mu=\eta=1$ and $c=2$ (ng/ml), providing $k_1=18.18$ (pg/(ml$\cdot$min)) and $k_2=1.3$ (min$^{-1}$), we compute the critical value of the bifurcation parameter $\beta_4=3.084$ (min) and the critical value of the total average time-delay $\tau_g=12.336$ (min). In Fig. \ref{fig:gamma2}, periodic solutions can be observed for $\beta=3.5$ (min).

However, if $\alpha<6$, we note that for any combination of parameters $c$ and $\eta=\mu$ we obtain $\tau_g>15$ (min). When
$\alpha=3$, $\mu=\eta=0.95$ and $c=2$ (ng/ml), providing $k_1=5.14$ (pg/(ml$\cdot$min)) and $k_2=0.36$ (min$^{-1}$), we determine the critical value $\beta_4=16.9753$ (min) and hence, $\tau_g=67.9$ (min). Periodic solutions can only be seen for $\beta>16.9753$ (min) (see Fig. \ref{fig:gamma1}).

Similarly as in the case of Dirac kernels, we notice that oscillations corresponding to smaller critical values (fast feedback) have higher amplitudes and higher frequency (over a 24 hour range) than those corresponding to larger critical values (slow feedback).

\begin{figure}[htbp]
\centering
\begin{tabular}{ccc}
\includegraphics[width=0.47\textwidth]{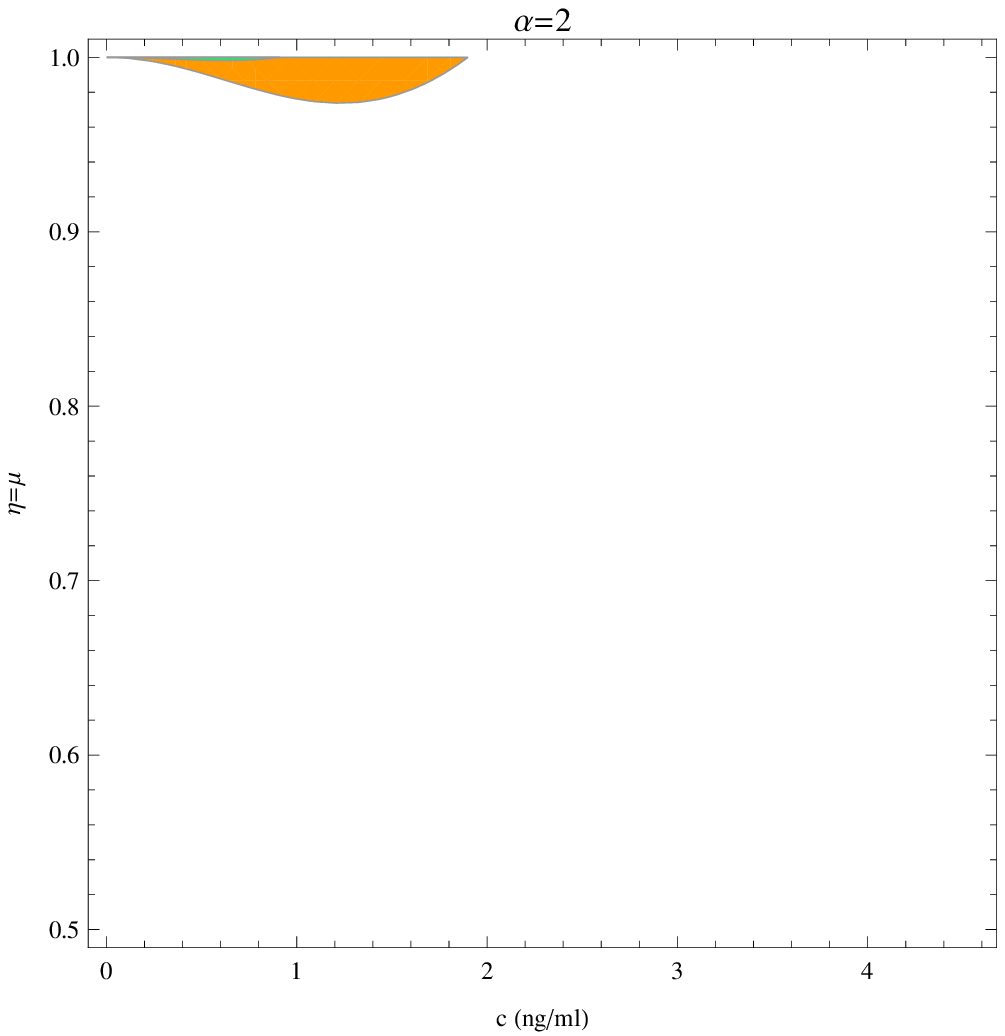} &
\includegraphics[width=0.47\textwidth]{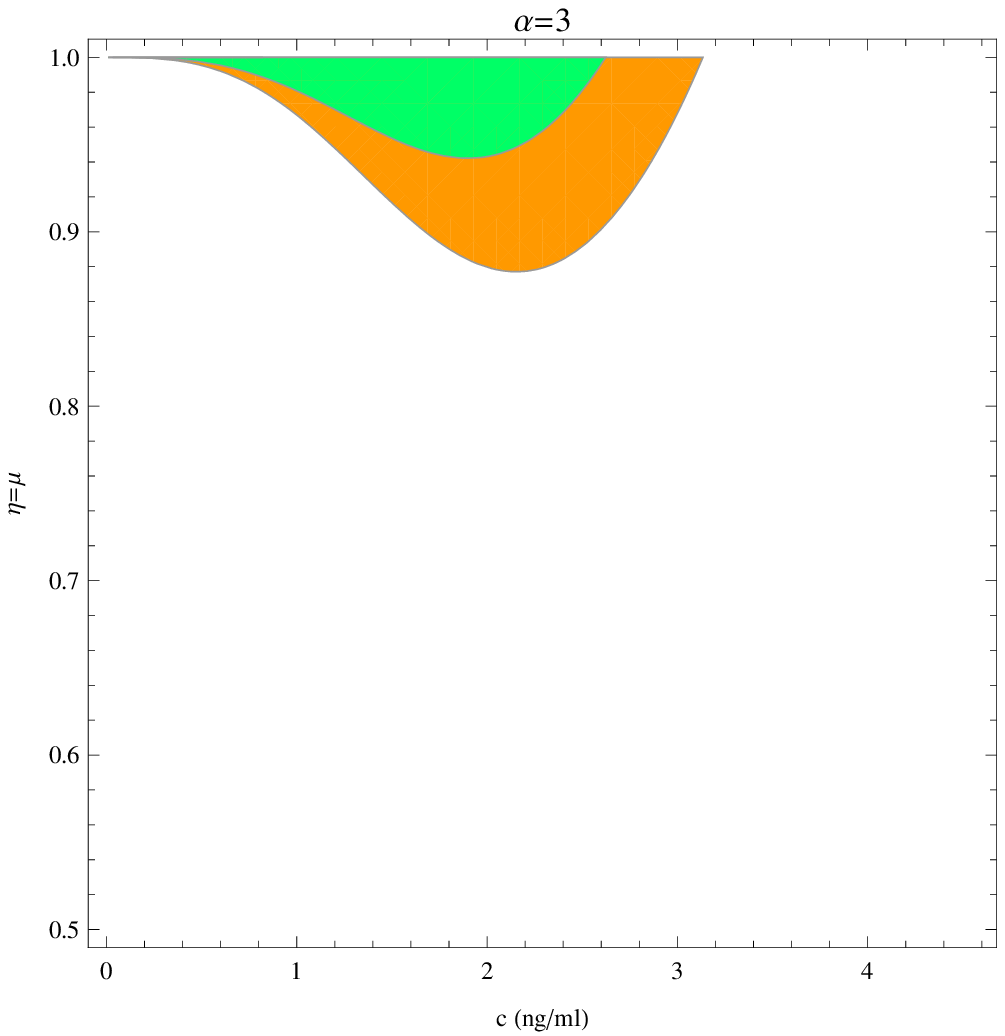} \\
\includegraphics[width=0.47\textwidth]{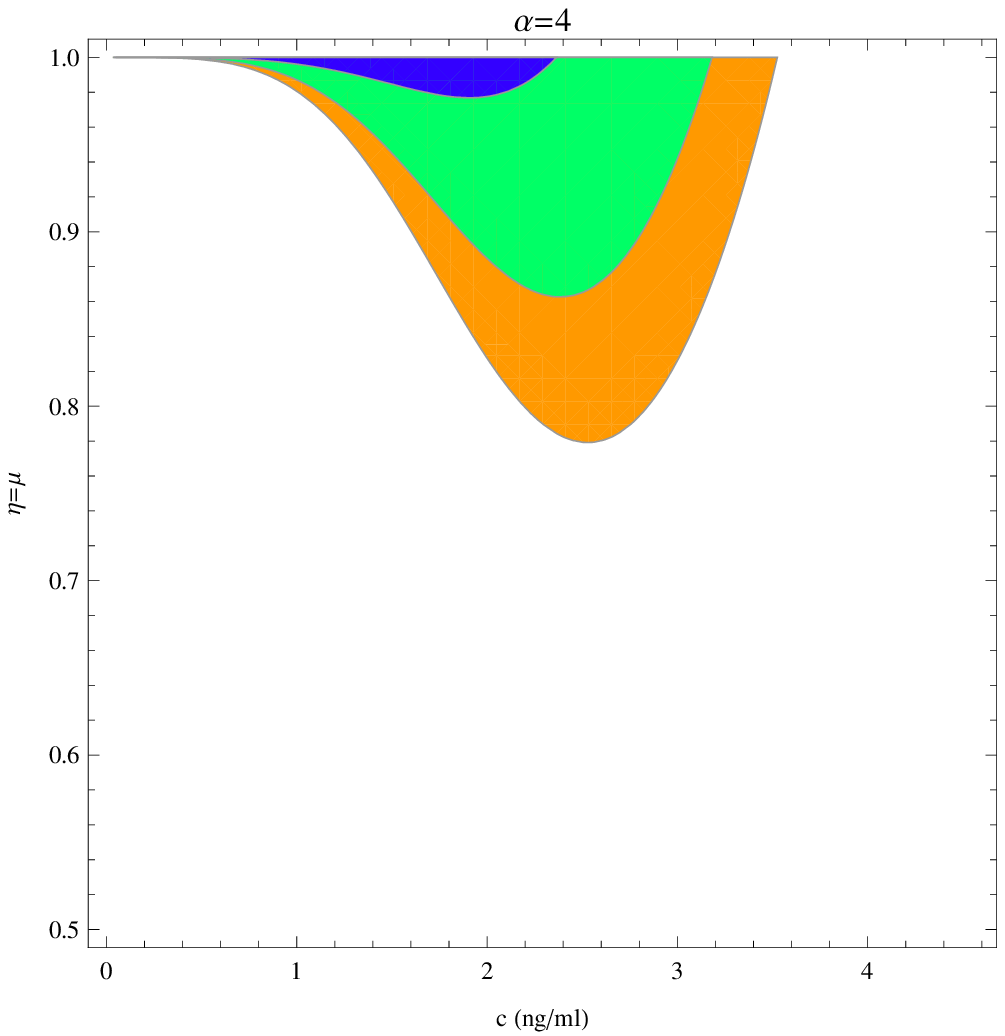} &
\includegraphics[width=0.47\textwidth]{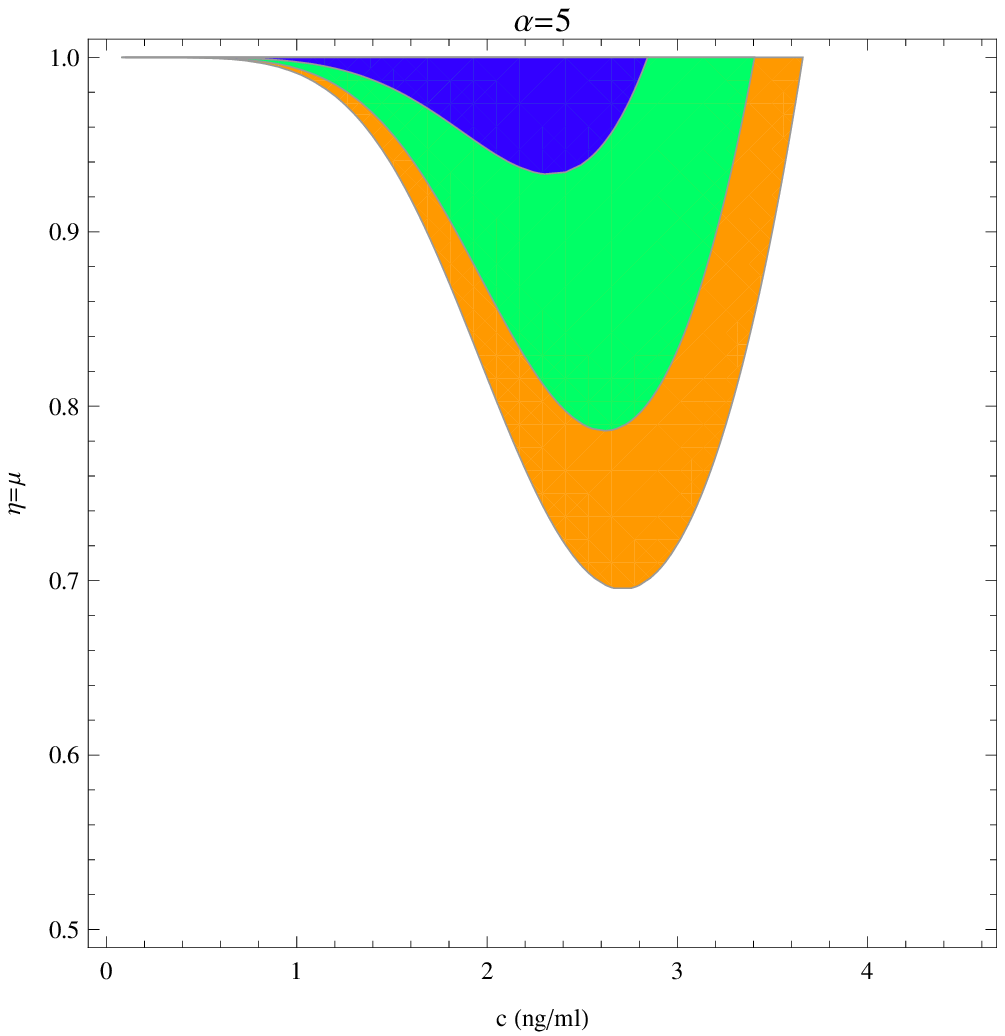} \\
\includegraphics[width=0.47\textwidth]{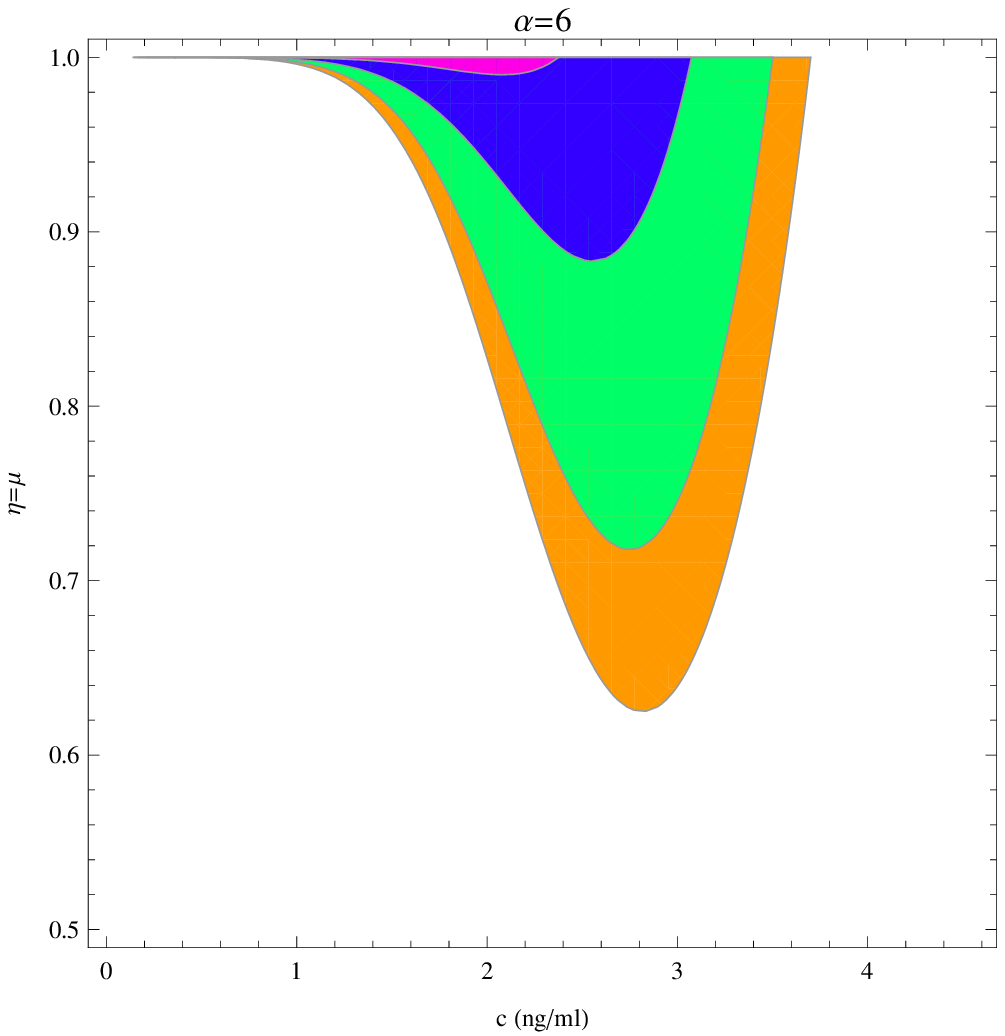} &
\includegraphics[width=0.47\textwidth]{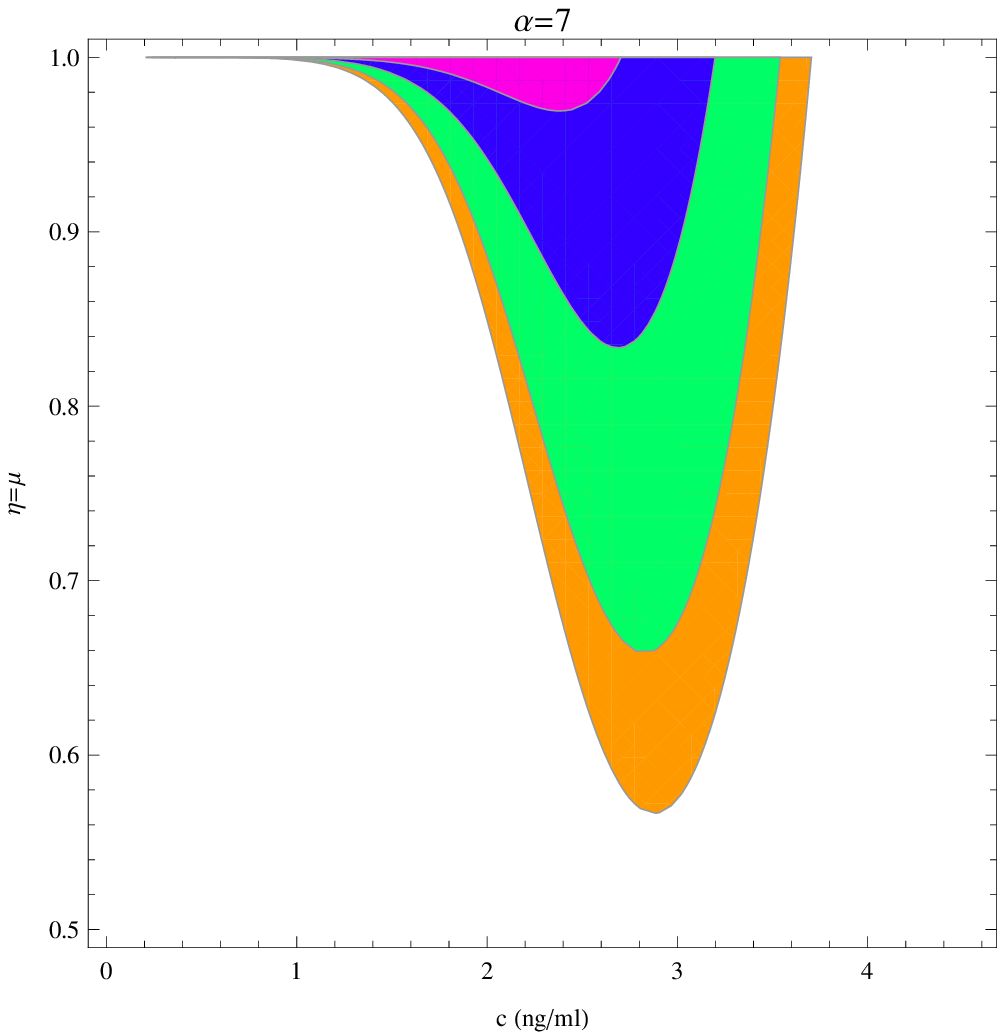}
\end{tabular}
\caption{Regions in the parameter plane $(c,\eta=\mu)$ resulting in critical values $\tau_g\in(60,90]$ (orange), $\tau_g\in(30, 60]$ (green), $\tau_g\in(15,30]$ (blue) and $\tau_g\in(0,15]$ (pink) respectively, while inequality $(\overline{I_2})$ is fulfilled, for different values of $\alpha$. (case of strong Gamma kernels)}
\label{fig:gamma}
\end{figure}

\begin{figure}
\centering
\begin{tabular}{ccc}
\includegraphics[width=0.47\textwidth]{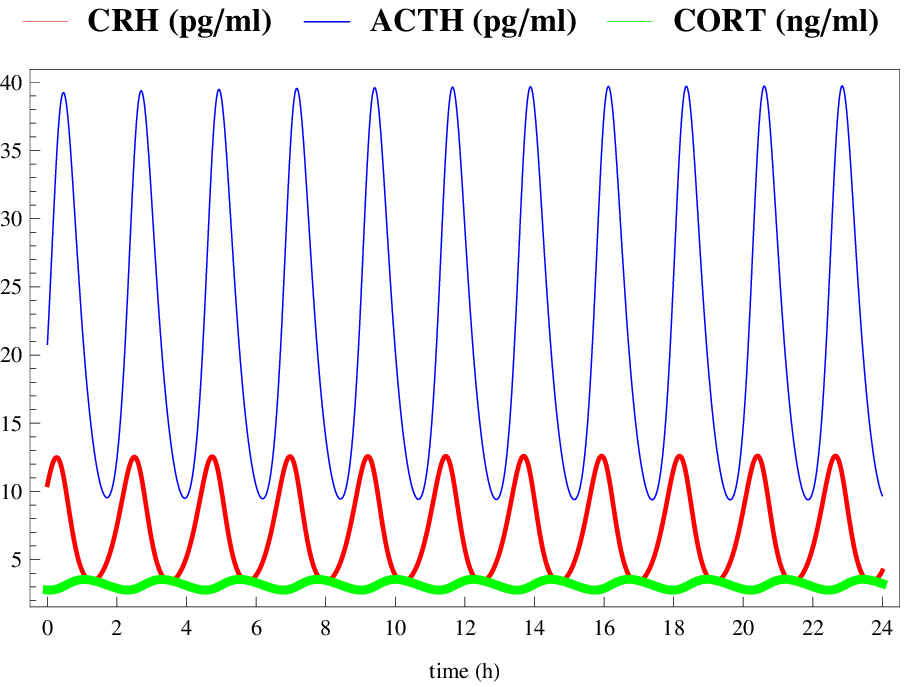} &
\includegraphics[width=0.47\textwidth]{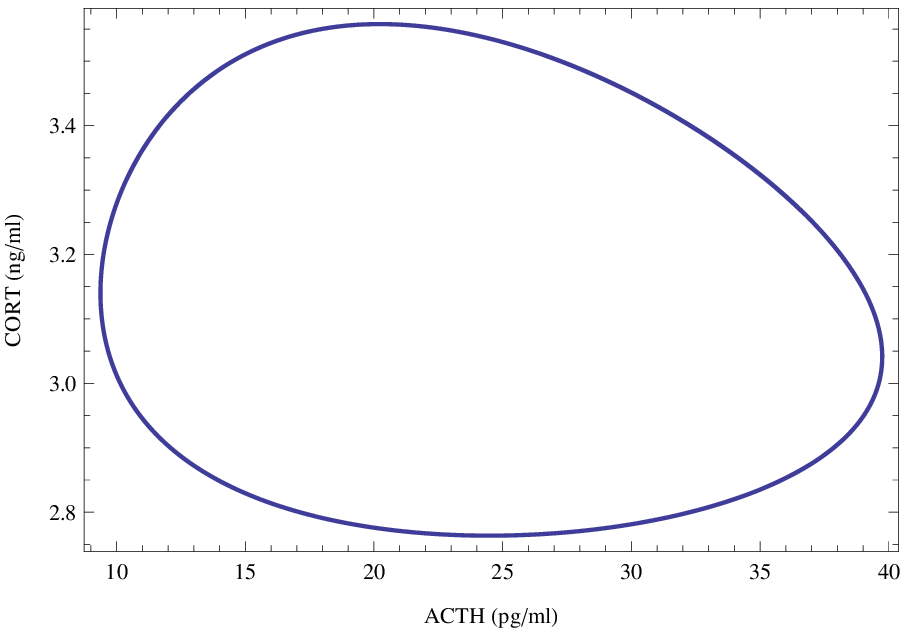} \\
\end{tabular}
\caption{Stable periodic orbit of system (\ref{sys.hpa.dd}) with strong Gamma kernels ($n_1=0$, $n_2=n_{31}=n_{32}=2$, $\beta=3.5$ (min), mean delays: $\tau_2=\tau_{31}=\tau_{32}=7$ (min)) due to the Hopf bifurcation taking place when the bifurcation parameter $\beta$ exceeds the critical value $\beta_4=3.084$ (min), in the case of feedback functions $f_1(u)=18.18\left(1-\frac{u^{6}}{2000^{6}+u^{6}}\right)$, $f_2(u)=1.3\left(1-\frac{u^{6}}{2000^{6}+u^{6}}\right)$.}
\label{fig:gamma2}
\end{figure}

\begin{figure}[htbp]
\centering
\begin{tabular}{ccc}
\includegraphics[width=0.47\textwidth]{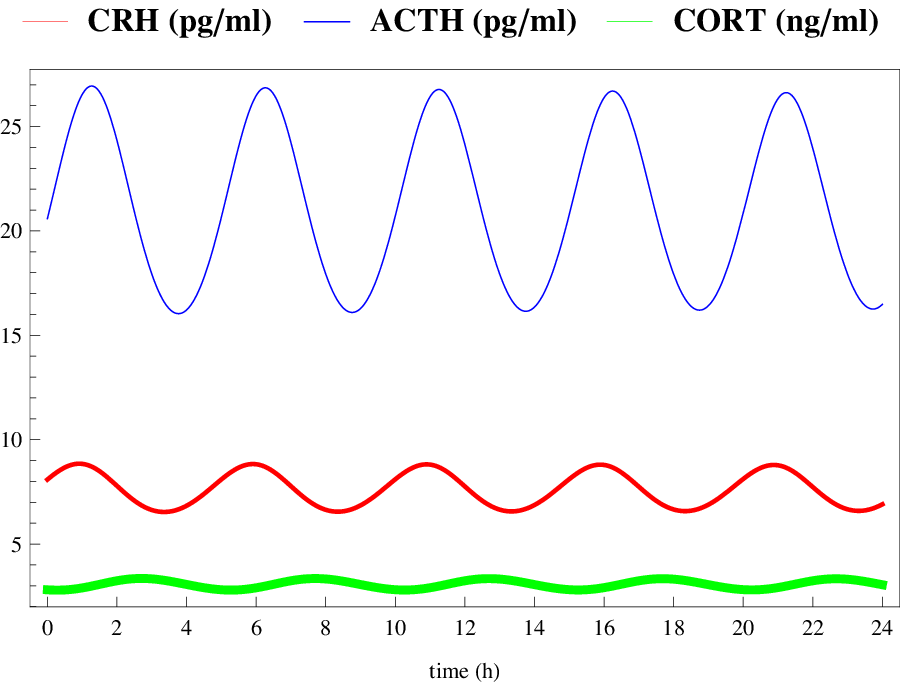} &
\includegraphics[width=0.47\textwidth]{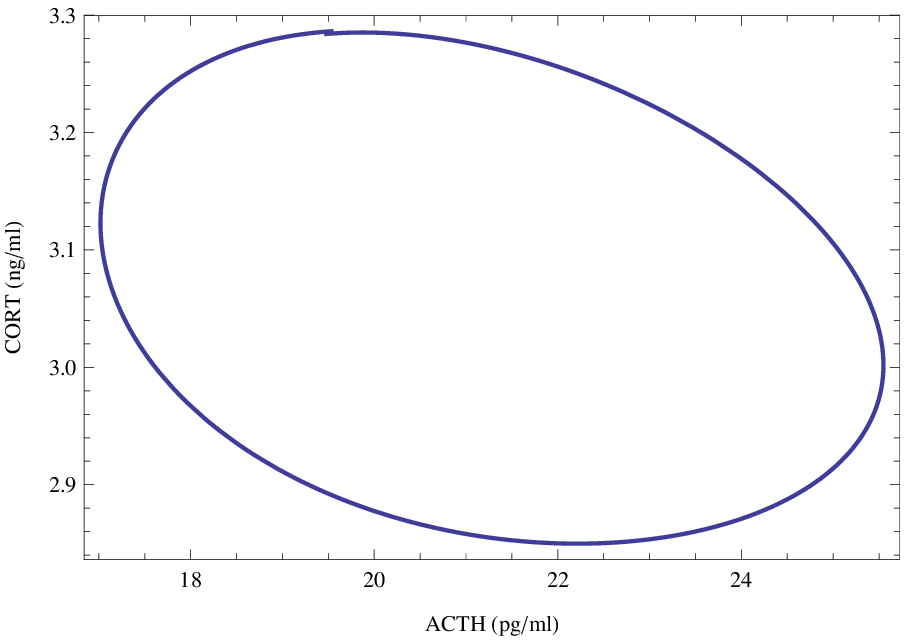} \\
\end{tabular}
\caption{Stable periodic orbit of system (\ref{sys.hpa.dd}) with strong Gamma kernels ($n_1=0$, $n_2=n_{31}=n_{32}=2$, $\beta=17$ (min), mean delays: $\tau_2=\tau_{31}=\tau_{32}=34$ (min)) due to the Hopf bifurcation taking place when the bifurcation parameter $\beta$ exceeds the critical value $\beta_4=16.9753$ (min), in the case of feedback functions $f_1(u)=5.14\left(1-\frac{u^{3}}{2000^{3}+u^{3}}\right)$, $f_2(u)=0.36\left(1-\frac{u^{3}}{2000^{3}+u^{3}}\right)$.}
\label{fig:gamma1}
\end{figure}

\subsection{Mixed kernels}

For numerical simulations in the case of mixed kernels, we choose:
\begin{itemize}
\item $h_1(t)=\delta(t)$ - no time-delay;
\item $h_2(t)=\delta(t-\tau_2)$ - Dirac kernel;
\item $h_{31}(t)=h_{32}(t)=\df{t e^{-t/\beta}}{\beta^{2}}$ strong Gamma-kernels with $n=n_{31}=n_{32}=2$.
\end{itemize}

From Theorem \ref{thm.bif.mix} and eq. (\ref{eq.tau.mix}), we can numerically determine the critical value $\tilde{\tau}_{20}$  of the average time delay due the positive feedback of the pituitary on the adrenal glands $\tau=\tau_2$, representing the Hopf bifurcation parameter.

When $\alpha=6$, $\mu=\eta=1$ and $c=2$ (ng/ml) and $\beta=3.5$, with $k_1=18.18$ (pg/(ml$\cdot$min)) and $k_2=1.3$ (min$^{-1}$), the critical value is $\tilde{\tau}_{20}=5.042$ (min). In Fig. \ref{fig:mixed2}, periodic solutions are displayed for $\tau_2=6$ (min).

On the other hand, when $\alpha=3$, $\mu=\eta=0.95$ and $c=2$ (ng/ml), with $k_1=5.14$ (pg/(ml$\cdot$min)) and $k_2=0.36$ (min$^{-1}$), the critical value is $\tilde{\tau}_{20}=22.13$ (min). In Fig. \ref{fig:mixed1}, periodic solutions are shown for $\tau_2=23$ (min).

As in the previous two cases, oscillations corresponding to smaller critical values (fast feedback) have higher amplitudes and higher frequency (over a 24 hour range) than those corresponding to larger critical values (slow feedback).

\begin{figure}
\centering
\begin{tabular}{ccc}
\includegraphics[width=0.47\textwidth]{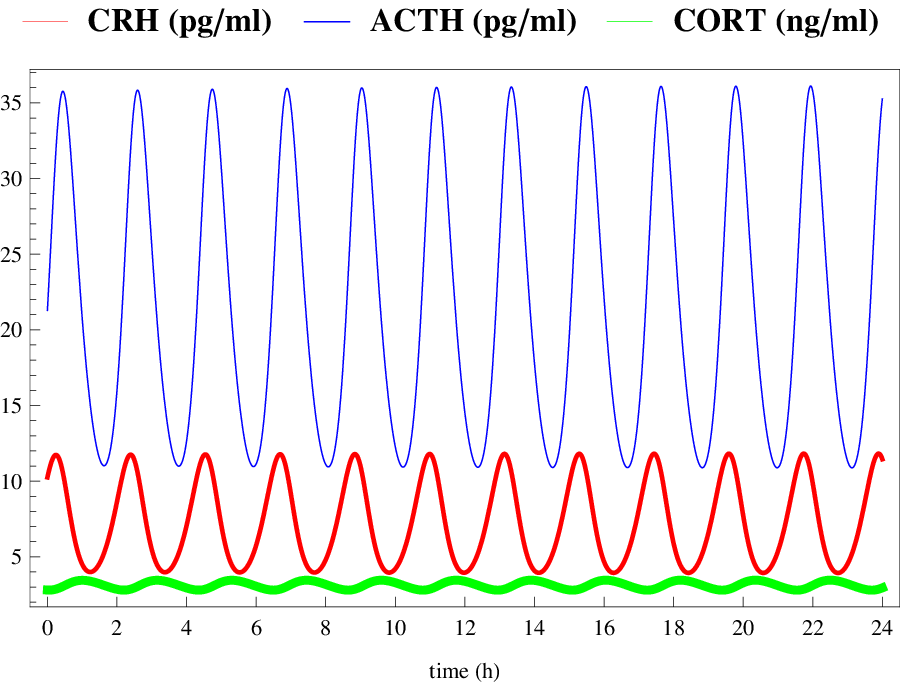} &
\includegraphics[width=0.47\textwidth]{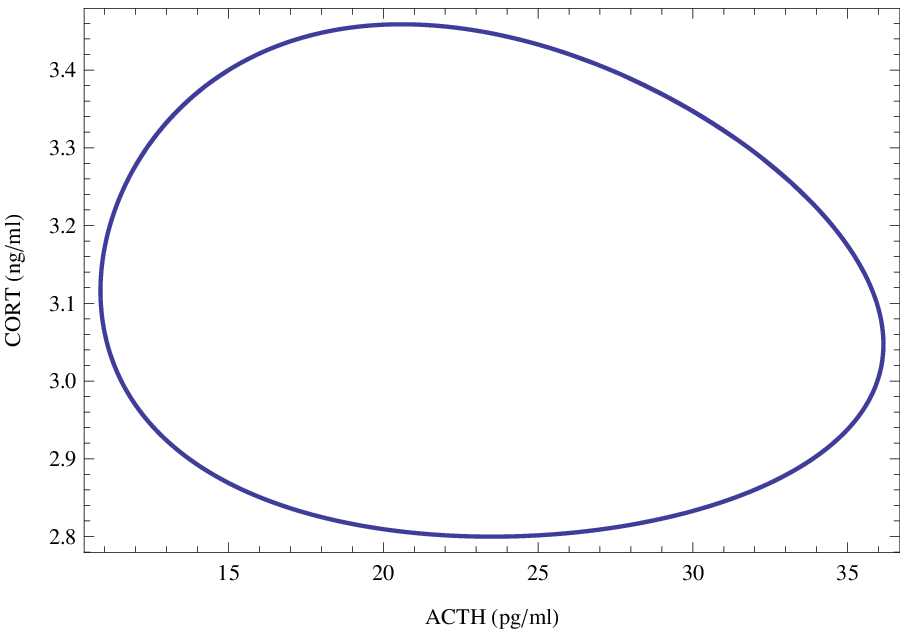} \\
\end{tabular}
\caption{Stable periodic orbit of system (\ref{sys.hpa.dd}) with mixed kernels (Dirac kernel $h_2(t)=\delta(t-\tau_2)$, with $\tau_2=6$ (min), and strong Gamma kernels $h_{31}=h_{32}$ with $n_{31}=n_{32}=2$, $\beta=3.5$ and mean delays $\tau_{31}=\tau_{32}=7$ (min)) due to the Hopf bifurcation taking place when the bifurcation parameter $\tau=\tau_2$ exceeds the critical value $\tilde{\tau}_{20}=5.042$ (min), in the case of feedback functions $f_1(u)=18.18\left(1-\frac{u^{6}}{2000^{6}+u^{6}}\right)$, $f_2(u)=1.3\left(1-\frac{u^{6}}{2000^{6}+u^{6}}\right)$.}
\label{fig:mixed2}
\end{figure}

\begin{figure}[htbp]
\centering
\begin{tabular}{ccc}
\includegraphics[width=0.47\textwidth]{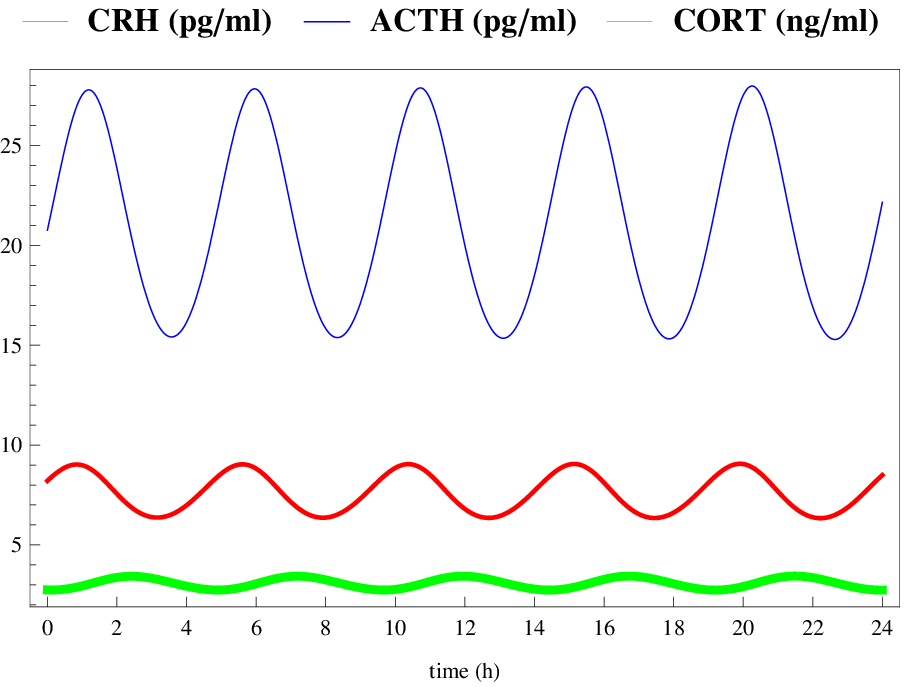} &
\includegraphics[width=0.47\textwidth]{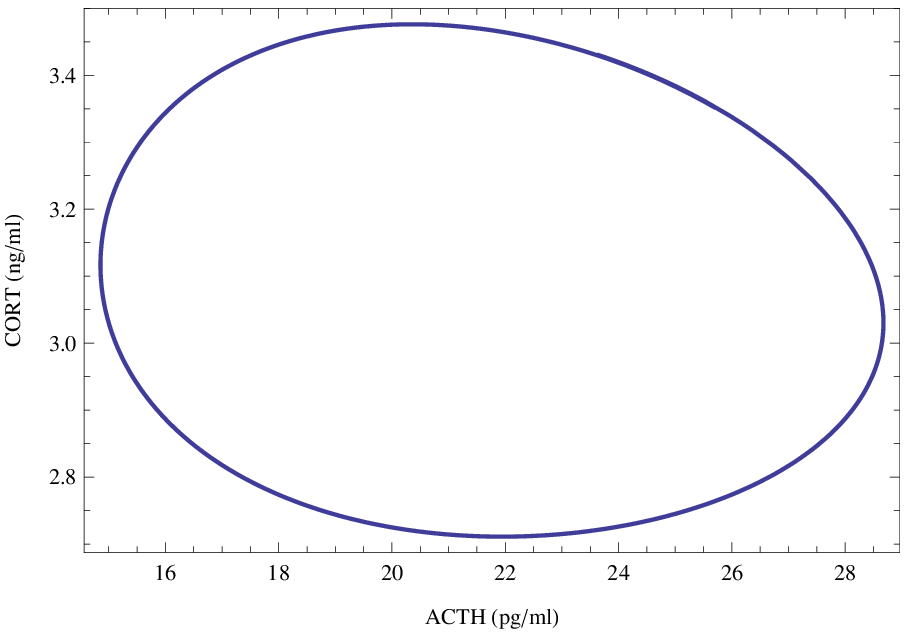} \\
\end{tabular}
\caption{Stable periodic orbit of system (\ref{sys.hpa.dd}) with mixed kernels (Dirac kernel $h_2(t)=\delta(t-\tau_2)$, with $\tau_2=23$ (min), and strong Gamma kernels $h_{31}=h_{32}$ with $n_{31}=n_{32}=2$, $\beta=20$ and mean delays $\tau_{31}=\tau_{32}=40$ (min)) due to the Hopf bifurcation taking place when the bifurcation parameter $\tau=\tau_2$ exceeds the critical value $\tilde{\tau}_{20}=22.13$ (min), in the case of feedback functions $f_1(u)=5.14\left(1-\frac{u^{3}}{2000^{3}+u^{3}}\right)$, $f_2(u)=0.36\left(1-\frac{u^{3}}{2000^{3}+u^{3}}\right)$.}
\label{fig:mixed1}
\end{figure}

\subsection{Fractional order model}

The numerical simulations for the fractional-order system (\ref{sys.hpa.frac}) have been performed using an extension of the Adams-Bashforth-Moulton predictor-corrector method presented in \cite{Diethelm}.

The delay accounting for the positive feedback of the hypothalamus on the pituitary is $\tau_1=0$. The delays due to the positive feedback of the pituitary on the adrenal glands and to the negative feedback effect of the adrenal glands on the hypothalamus and pituitary, respectively, are chosen to be equal: $\tau_2=\tau_{31}=\tau_{32}$.

When $\alpha=6$, $\mu=\eta=1$ and $c=2$ (ng/ml), with $k_1=18.18$ (pg/(ml$\cdot$min)) and $k_2=1.3$ (min$^{-1}$), a stable limit cycle has been observed numerically for $\tau_2=\tau_{31}=\tau_{32}=14$ (min) and the fractional order $q=0.9$ (see Fig. \ref{fig:frac21}). As it can been expected, if a smaller fractional order is taken into account (e.g. $q=0.8$) for the same delays, the equilibrium point $E$ is asymptotically stable (see  Fig. \ref{fig:frac22}). Smaller fractional orders are associated with a more pronounced asymptotically stable behaviour of the system.

\begin{figure}
\centering
\begin{tabular}{ccc}
\includegraphics[width=0.47\textwidth]{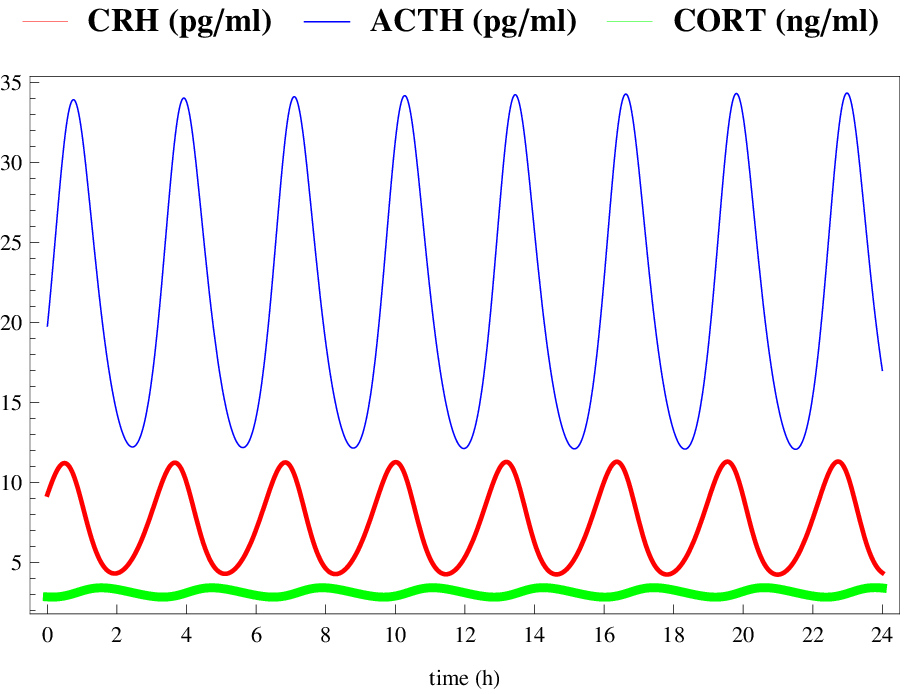} &
\includegraphics[width=0.47\textwidth]{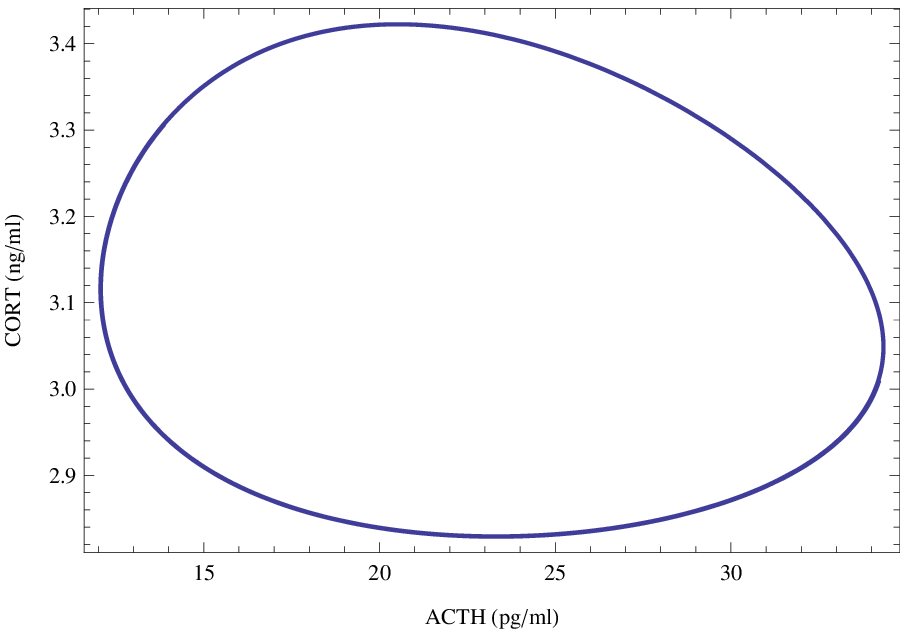} \\
\end{tabular}
\caption{Stable periodic orbit of system (\ref{sys.hpa.frac}) with fractional order $q=0.9$ and discrete delays $\tau_1=0$ and $\tau_2=\tau_{31}=\tau_{32}=14$ (min), in the case of feedback functions $f_1(u)=18.18\left(1-\frac{u^{6}}{2000^{6}+u^{6}}\right)$, $f_2(u)=1.3\left(1-\frac{u^{6}}{2000^{6}+u^{6}}\right)$.}
\label{fig:frac21}
\end{figure}

\begin{figure}
\centering
\begin{tabular}{ccc}
\includegraphics[width=0.47\textwidth]{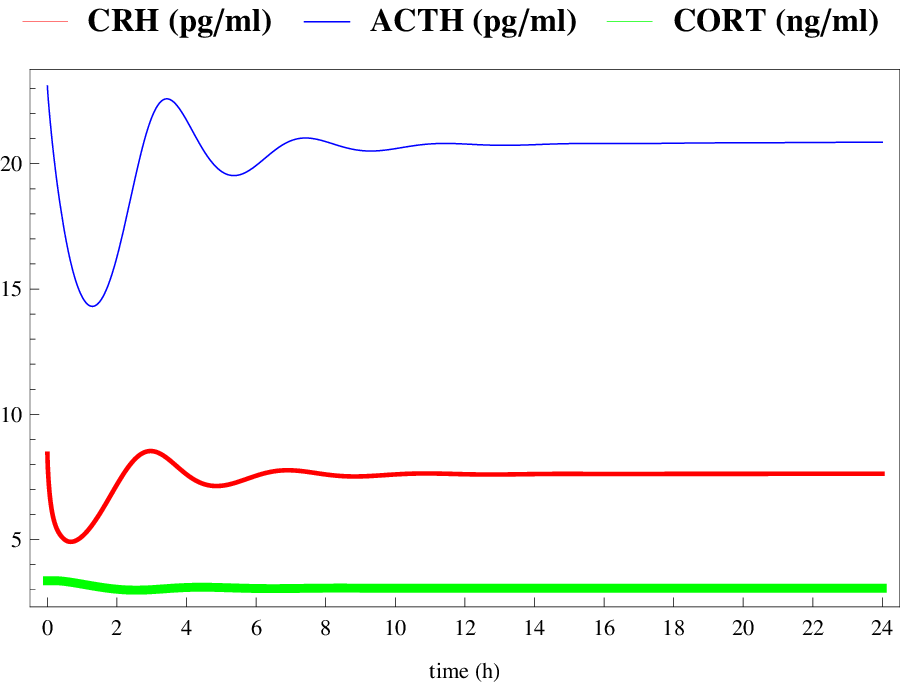} &
\includegraphics[width=0.47\textwidth]{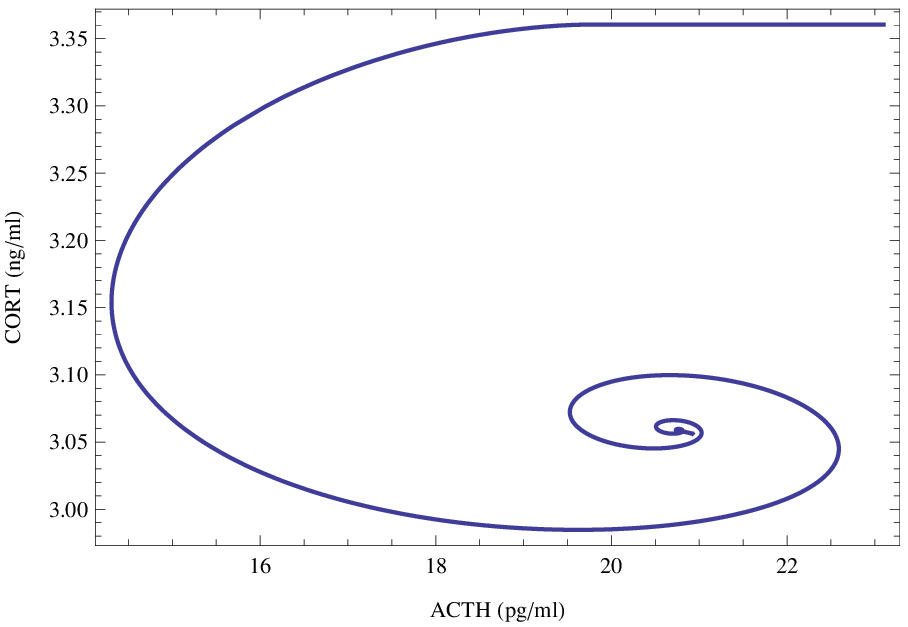} \\
\end{tabular}
\caption{Trajectories of system (\ref{sys.hpa.dd}) with fractional order $q=0.8$ and discrete delays $\tau_1=0$ and $\tau_2=\tau_{31}=\tau_{32}=14$ (min), converge to the asymptotically stable equilibrium point $E$, in the case of feedback functions $f_1(u)=18.18\left(1-\frac{u^{6}}{2000^{6}+u^{6}}\right)$, $f_2(u)=1.3\left(1-\frac{u^{6}}{2000^{6}+u^{6}}\right)$.}
\label{fig:frac22}
\end{figure}

\section{Conclusions and future work }
In this paper, we have generalized the existing minimal model of the HPA axis, firstly, by including distributed time delays and secondly, by considering frac\-tional-order derivatives. This approach to the modelling of the biological processes is more realistic because it involves memory properties, taking into account the whole past history of the variables. These models are able to capture the vital mechanisms of the HPA system.

The existence of a unique equilibrium point of the considered models has been proved. Considering general delay kernels in the model with distributed delays, delay-independent sufficient conditions for the local asymptotic stability of the unique equilibrium point have been obtained. These findings are useful if one is unable to estimate the time delays in the system.  A thorough bifurcation analysis has been undertaken in three cases: Dirac kernels, Gamma kernels, and finally, a mixed choice of Dirac and Gamma kernels. Critical values of the appropriately chosen bifurcation parameters have been found which account for the occurrence of Hopf bifurcations. { Studying the criticality of Hopf bifurcations is a laborious mathematical task, which will be addressed in a future paper. }

Extensive numerical simulations show that when the bifurcation parameters pass through the critical values, periodic solutions appear { which reproduce the ultradian rhythm of the HPA axis }. It has been observed that oscillations corresponding to smaller critical values of the bifurcation parameters (fast feedback) have higher amplitudes and higher frequency (over a 24 hour range) than those generated by larger critical values (slow feedback).

For the fractional-order mathematical model of the HPA axis, it has been shown that if no time delays are considered, the unique equilibrium point is asymptotically stable. When discrete time delays are introduced, we rely on numerical simulations to exemplify the existence of oscillatory solutions for sufficiently large subunitary values of the fractional order. Numerical simulations show that, in the presence of discrete time delays, smaller fractional orders are associated with a more pronounced asymptotically stable behaviour of the system, in a neighborhood of the equilibrium point.

{ Different approaches of the HPA axis including environmental and physiological perturbations (for example, in the form of white or colored noise, or time-varying input)  which can be modelled by stochastic  and/or  impulsive terms, will be developed as future research, with the aim of reproducing both the circadian and ultradian rhythms underlying cortisol secretion within the HPA system. }

\section*{Acknowledgements}

The authors are especially grateful to the editor and the referees for helpful comments and suggestions.

This work was supported by grants of the Romanian National Authority for Scientific Research and Innovation, CNCS-UEFISCDI, project no.
PN-II-ID-PCE-2011-3-0198 and project no. PN-II-RU-TE-2014-4-0270.

\vspace*{6pt}



\begin{thebibliography}{}

\bibitem[\protect\citename{Adimy \& Crauste, }2003]{Adimy_2003}
Adimy, M., \& Crauste, F. 2003.
\newblock Global stability of a partial differential equation with distributed
  delay due to cellular replication.
\newblock {\em Nonlinear Analysis: Theory, Methods \& Applications}, {\bf
  54}(8), 1469--1491.

\bibitem[\protect\citename{Adimy {\em et~al.}, }2005]{Adimy_2005}
Adimy, M., Crauste, F., \& Ruan, S. 2005.
\newblock Stability and Hopf bifurcation in a mathematical model of pluripotent
  stem cell dynamics.
\newblock {\em Nonlinear Analysis: Real World Applications}, {\bf 6}(4),
  651--670.

\bibitem[\protect\citename{Adimy {\em et~al.}, }2006]{Adimy_2006}
Adimy, M., Crauste, F., Halanay, A.~Neam{\c{t}}u, M., \& Opri{\c{s}}, D. 2006.
\newblock Stability of limit cycles in a pluripotent stem cell dynamics model.
\newblock {\em Chaos, Solitons \& Fractals}, {\bf 27}(4), 1091--1107.

\bibitem[\protect\citename{Andersen {\em et~al.}, }2013]{Andersen_2013}
Andersen, M., Vinther, F., \& Ottesen, J.T. 2013.
\newblock Mathematical modeling of the hypothalamic--pituitary--adrenal gland
  (HPA) axis, including hippocampal mechanisms.
\newblock {\em Mathematical Biosciences}, {\bf 246}(1), 122--138.

\bibitem[\protect\citename{Bairagi {\em et~al.}, }2008]{Bairagi_2008}
Bairagi, N., Chatterjee, S., \& Chattopadhyay, J. 2008.
\newblock Variability in the secretion of corticotropin-releasing hormone,
  adrenocorticotropic hormone and cortisol and understandability of the
  hypothalamic-pituitary-adrenal axis dynamics - a mathematical study based on
  clinical evidence.
\newblock {\em Mathematical Medicine and Biology},  1--27.

\bibitem[\protect\citename{Bernard {\em et~al.}, }2001]{Bernard_2001}
Bernard, S., B{\'e}lair, J., \& Mackey, M.C. 2001.
\newblock Sufficient conditions for stability of linear differential equations
  with distributed delay.
\newblock {\em Discrete and Continuous Dynamical Systems Series B}, {\bf 1}(2),
  233--256.

\bibitem[\protect\citename{Boscaro {\em et~al.}, }1998]{Boscaro_1998}
Boscaro, M., Paoletta, A., Scarpa, E., Barzon, L., Fusaro, P., Fallo, F., \&
  Sonino, N. 1998.
\newblock Age-Related Changes in Glucocorticoid Fast Feedback Inhibition of
  Adrenocorticotropin in Man 1.
\newblock {\em The Journal of Clinical Endocrinology \& Metabolism}, {\bf
  83}(4), 1380--1383.

\bibitem[\protect\citename{Campbell \& Jessop, }2009]{Campbell_2009}
Campbell, S.A., \& Jessop, R. 2009.
\newblock Approximating the stability region for a differential equation with a
  distributed delay.
\newblock {\em Mathematical Modelling of Natural Phenomena}, {\bf 4}(02),
  1--27.

\bibitem[\protect\citename{Carroll {\em et~al.}, }2007]{Carroll_2007}
Carroll, B.J., Cassidy, F., Naftolowitz, D., Tatham, N.E., Wilson, W.H.,
  Iranmanesh, A., Liu, P.Y., \& Veldhuis, J.D. 2007.
\newblock Pathophysiology of hypercortisolism in depression.
\newblock {\em Acta Psychiatrica Scandinavica}, {\bf 115}(s433), 90--103.

\bibitem[\protect\citename{Conrad {\em et~al.}, }2009]{Conrad_2009}
Conrad, M., Hubold, C., Fischer, B., \& Peters, A. 2009.
\newblock Modeling the hypothalamus--pituitary--adrenal system: homeostasis by
  interacting positive and negative feedback.
\newblock {\em Journal of Biological Physics}, {\bf 35}(2), 149--162.

\bibitem[\protect\citename{Cushing, }2013]{Cushing_2013}
Cushing, J.M. 2013.
\newblock {\em Integrodifferential equations and delay models in population
  dynamics}.
\newblock  Vol. 20.
\newblock Springer Science \& Business Media.

\bibitem[\protect\citename{Diekmann \& Gyllenberg, }2012]{Diekmann_2012}
Diekmann, O., \& Gyllenberg, M. 2012.
\newblock Equations with infinite delay: blending the abstract and the
  concrete.
\newblock {\em Journal of Differential Equations}, {\bf 252}(2), 819--851.

\bibitem[\protect\citename{Diethelm {\em et~al.}, }2002]{Diethelm}
Diethelm, K., Ford, N.J., \& Freed, A.D. 2002.
\newblock A predictor-corrector approach for the numerical solution of
  fractional differential equations.
\newblock {\em Nonlinear Dynamics}, {\bf 29}(1-4), 3--22.

\bibitem[\protect\citename{Faria \& Oliveira, }2008]{Faria_2008}
Faria, T., \& Oliveira, J.J. 2008.
\newblock Local and global stability for Lotka--Volterra systems with
  distributed delays and instantaneous negative feedbacks.
\newblock {\em Journal of Differential Equations}, {\bf 244}(5), 1049--1079.

\bibitem[\protect\citename{Gudmand-Hoeyer {\em et~al.}, }2014]{Gudmand_2014}
Gudmand-Hoeyer, J., Timmermann, S., \& Ottesen, J.T. 2014.
\newblock Patient-specific modeling of the neuroendocrine HPA-axis and its
  relation to depression: Ultradian and circadian oscillations.
\newblock {\em Mathematical Biosciences}, {\bf 257}, 23--32.

\bibitem[\protect\citename{Hermus {\em et~al.}, }1984]{Hermus_1984}
Hermus, A.R.M.M., Pieters, G.F.F.M., Smals, A.G.H., Benraad, Th.J., \&
  Kloppenborg, P.W.C. 1984.
\newblock Plasma adrenocorticotropin, cortisol, and aldosterone responses to
  corticotropin-releasing factor: modulatory effect of basal cortisol levels.
\newblock {\em The Journal of Clinical Endocrinology \& Metabolism}, {\bf
  58}(1), 187--191.

\bibitem[\protect\citename{Jeli{\'c} {\em et~al.}, }2005]{Jelic_2005}
Jeli{\'c}, S., {\v{C}}upi{\'c}, {\v{Z}}., \& Kolar-Ani{\'c}, L. 2005.
\newblock Mathematical modeling of the hypothalamic--pituitary--adrenal system
  activity.
\newblock {\em Mathematical Biosciences}, {\bf 197}(2), 173--187.

\bibitem[\protect\citename{Jessop \& Campbell, }2010]{Jessop_2010}
Jessop, R., \& Campbell, S.A. 2010.
\newblock Approximating the stability region of a neural network with a general
  distribution of delays.
\newblock {\em Neural Networks}, {\bf 23}(10), 1187--1201.

\bibitem[\protect\citename{Karst {\em et~al.}, }2005]{Karst_2005}
Karst, H., Berger, S., Turiault, M., Tronche, F., Sch{\"u}tz, G., \& Jo{\"e}ls,
  M. 2005.
\newblock Mineralocorticoid receptors are indispensable for nongenomic
  modulation of hippocampal glutamate transmission by corticosterone.
\newblock {\em Proceedings of the National Academy of Sciences of the United
  States of America}, {\bf 102}(52), 19204--19207.

\bibitem[\protect\citename{Kilbas {\em et~al.}, }2006]{Kilbas}
Kilbas, A.A., Srivastava, H.M., \& Trujillo, J.J. 2006.
\newblock {\em Theory and Applications of Fractional Differential Equations}.
\newblock Elsevier.

\bibitem[\protect\citename{Kyrylov {\em et~al.}, }2005]{Kyrylov_2005}
Kyrylov, V., Severyanov, L., \& Vieira, A. 2005.
\newblock Modeling robust oscillatory behavior of the
  hypothalamic-pituitary-adrenal axis.
\newblock {\em Biomedical Engineering, IEEE Transactions on}, {\bf 52}(12),
  1977--1983.

\bibitem[\protect\citename{Lakshmikantham {\em et~al.}, }2009]{Lak}
Lakshmikantham, V., Leela, S., \& Devi, J.~Vasundhara. 2009.
\newblock {\em Theory of fractional dynamic systems}.
\newblock Cambridge Scientific Publishers.

\bibitem[\protect\citename{Landsberg {\em et~al.}, }1992]{Landsberg_1992}
Landsberg, L., Young, J.B., Wilson, J.D., \& Foster, D.W. 1992.
\newblock {\em Williams Textbook of Endocrinology}.
\newblock Prentice Hall International, New Jersey.

\bibitem[\protect\citename{Lenbury \& Pornsawad, }2005]{Lenbury_2005}
Lenbury, Y., \& Pornsawad, P. 2005.
\newblock A delay-differential equation model of the feedback-controlled
  hypothalamus--pituitary--adrenal axis in humans.
\newblock {\em Mathematical Medicine and Biology}, {\bf 22}(1), 15--33.

\bibitem[\protect\citename{Li \& Ma, }2013]{Li_Ma_2013}
Li, C., \& Ma, Y. 2013.
\newblock Fractional dynamical system and its linearization theorem.
\newblock {\em Nonlinear Dynamics}, {\bf 71}(4), 621--633.

\bibitem[\protect\citename{Markovic {\em et~al.}, }2011]{Markovic_2011}
Markovic, V.M., Cupic, Z., Vukojevic, V., \& Kolar-Anic, L. 2011.
\newblock Predictive modeling of the hypothalamic-pituitary-adrenal (HPA) axis
  response to acute and chronic stress.
\newblock {\em Endocrine Journal}, {\bf 58}(10), 889--904.

\bibitem[\protect\citename{Matignon, }1996]{Matignon}
Matignon, D. 1996.
\newblock Stability Results For Fractional Differential Equations With
  Applications To Control Processing.
\newblock {\em Pages  963--968 of:} {\em Computational Engineering in Systems
  Applications}.

\bibitem[\protect\citename{Murray, }2002]{Murray_2002}
Murray, J.D. 2002.
\newblock {\em Mathematical Biology I: An Introduction}.
\newblock Interdisciplinary Applied Mathematics, vol. 17.
\newblock Springer, New York, USA.

\bibitem[\protect\citename{{\"O}zbay {\em et~al.}, }2008]{Ozbay_2008}
{\"O}zbay, H., Bonnet, C., \& Clairambault, J. 2008.
\newblock Stability analysis of systems with distributed delays and application
  to hematopoietic cell maturation dynamics.
\newblock {\em Pages  2050--2055 of:} {\em 47th IEEE Conference on Decision and
  Control}.

\bibitem[\protect\citename{Podlubny, }1999]{Podlubny}
Podlubny, I. 1999.
\newblock {\em Fractional differential equations}.
\newblock Academic Press.

\bibitem[\protect\citename{Pornsawad, }2013]{Pornsawad_2013}
Pornsawad, P. 2013.
\newblock The feedforward-feedback system of the hypothalamus-pituitary-adrenal
  axis.
\newblock {\em Pages  1374--1379 of:} {\em Advances in Computing,
  Communications and Informatics (ICACCI), 2013 International Conference on}.
\newblock IEEE.

\bibitem[\protect\citename{Posener {\em et~al.}, }1997]{Posener_1997}
Posener, J.A., Schildkraut, J.J., Wilfams, G.H., \& Schatzberg, A.F. 1997.
\newblock Cortisol feedback effects on plasma corticotropin levels in healthy
  subjects.
\newblock {\em Psychoneuroendocrinology}, {\bf 22}(3), 169--176.

\bibitem[\protect\citename{Ruan \& Wolkowicz, }1996]{Ruan_1996}
Ruan, S., \& Wolkowicz, G.S.K. 1996.
\newblock Bifurcation analysis of a chemostat model with a distributed delay.
\newblock {\em Journal of Mathematical Analysis and Applications}, {\bf
  204}(3), 786--812.

\bibitem[\protect\citename{Russell {\em et~al.}, }2010]{Russell_2010}
Russell, G.M., Henley, D.E., Leendertz, J., Douthwaite, J.A., Wood, S.A.,
  Stevens, A., Woltersdorf, W.W., Peeters, B.W.M.M., Ruigt, G.S.F., White, A.,
  {\em et~al.} 2010.
\newblock Rapid glucocorticoid receptor-mediated inhibition of
  hypothalamic--pituitary--adrenal ultradian activity in healthy males.
\newblock {\em The Journal of Neuroscience}, {\bf 30}(17), 6106--6115.

\bibitem[\protect\citename{Savi{\'c} \& Jeli{\'c}, }2005]{Savic_2005}
Savi{\'c}, D., \& Jeli{\'c}, S. 2005.
\newblock A mathematical model of the hypothalamo-pituitary-adrenocortical
  system and its stability analysis.
\newblock {\em Chaos, Solitons \& Fractals}, {\bf 26}(2), 427--436.

\bibitem[\protect\citename{Savi{\'c} {\em et~al.}, }2006]{Savic_2006}
Savi{\'c}, D., Jeli{\'c}, S., \& Buri{\'c}, N. 2006.
\newblock Stability of a general delay differential model of the
  hypothalamo-pituitary-adrenocortical system.
\newblock {\em International Journal of Bifurcation and Chaos}, {\bf 16}(10),
  3079--3085.

\bibitem[\protect\citename{Swanson, }2000]{Swanson_2000}
Swanson, L.W. 2000.
\newblock Cerebral hemisphere regulation of motivated behavior.
\newblock {\em Brain Research}, {\bf 886}(1), 113--164.

\bibitem[\protect\citename{Veldhuis {\em et~al.}, }2008]{Veldhuis_2008}
Veldhuis, J.D., Keenan, D.M., \& Pincus, S.M. 2008.
\newblock Motivations and methods for analyzing pulsatile hormone secretion.
\newblock {\em Endocrine Reviews}, {\bf 29}(7), 823--864.

\bibitem[\protect\citename{Vinther {\em et~al.}, }2011]{Vinther_2011}
Vinther, F., Andersen, M., \& Ottesen, J.T. 2011.
\newblock The minimal model of the hypothalamic--pituitary--adrenal axis.
\newblock {\em Journal of Mathematical Biology}, {\bf 63}(4), 663--690.

\bibitem[\protect\citename{Yuan \& B{\'e}lair, }2011]{Yuan_2011}
Yuan, Y., \& B{\'e}lair, J. 2011.
\newblock Stability and Hopf bifurcation analysis for functional differential
  equation with distributed delay.
\newblock {\em SIAM Journal on Applied Dynamical Systems}, {\bf 10}(2),
  551--581.

\end{thebibliography}


\end{document}